\documentclass[12pt,reqno]{amsart}

\usepackage[a4paper,margin=1in]{geometry}
\usepackage{amsmath}
\usepackage{amssymb}
\usepackage{amsthm}
\usepackage{mathtools}
\usepackage{microtype}
\usepackage{indentfirst}
\usepackage[numbers,compress,sort]{natbib}
\usepackage[colorlinks=true,
	linkcolor=blue,
	citecolor=blue,
	urlcolor=blue,
	filecolor=blue,
	menucolor=blue,
	runcolor=blue,
	pdftitle={Chern's Conjecture with Constant Cubic Trace},
	pdfauthor={Huixin Tan, Zizhou Tang, Yuquan Xie, Wenjiao Yan}]{hyperref}

\allowdisplaybreaks[4]
\newtheorem{theorem}{Theorem}[section]
\newtheorem{lemma}[theorem]{Lemma}
\newtheorem{proposition}[theorem]{Proposition}
\newtheorem{corollary}[theorem]{Corollary}
\theoremstyle{definition}

\theoremstyle{remark}
\newtheorem{remark}[theorem]{Remark}

\DeclareMathOperator{\Vol}{Vol}
\DeclareMathOperator{\tr}{tr}
\newcommand{\sphere}{\mathbb S}
\newcommand{\R}{\mathbb R}
\newcommand{\cH}{\mathcal H}
\newcommand{\cV}{\mathcal V}

\title[Chern's Conjecture with Constant Cubic Trace]{Chern's Conjecture with Constant Cubic Trace}

\author{Huixin Tan}
\address{School of Mathematical Sciences, Laboratory of Mathematics and Complex Systems, Beijing Normal University, Beijing 100875, P. R. China}
\email{hxtan@mail.bnu.edu.cn}

\author{Zizhou Tang}
\address{Chern Institute of Mathematics \& LPMC, Nankai University, Tianjin 300071, P. R. China}
\email{zztang@nankai.edu.cn}

\author{Yuquan Xie}
\address{School of Mathematics, Hangzhou Normal University, Hangzhou 311121, P. R. China}
\email{yuqxie@hznu.edu.cn}

\author{Wenjiao Yan}
\address{School of Mathematical Sciences, Laboratory of Mathematics and Complex Systems, Beijing Normal University, Beijing 100875, P. R. China}
\email{wjyan@bnu.edu.cn}

\subjclass[2020]{Primary 53C42; Secondary 53C24, 58J60}
\keywords{Chern conjecture, minimal hypersurface, constant scalar curvature, cubic trace, rolling radius, real analytic set}
\thanks{The project is partially supported by the NSFC (No.12271038, 12371048, 12526205), Nankai Zhide Foundation, and by the Open Project of the Key Laboratory of Mathematics and Complex Systems, Beijing Normal University (No. K202503).}

\begin{document}\raggedbottom

\begin{abstract}
	We prove that the values set of \(S=|A|^2\) attained by closed embedded minimal hypersurfaces in \(\mathbb S^{n+1}(1)\) with constant \(S\) and constant \(f_3=\operatorname{tr}(A^3)\) is locally finite, where \(A\) denotes the shape operator. Neither the topology of the hypersurface nor the value of \(f_3\) is fixed.
\end{abstract}

\maketitle

\section{Introduction}\label{sec:introduction}

Let \(F:M^n\to\sphere^{n+1}(1)\) be a closed minimal hypersurface, let \(A\) be its shape operator, and put \(S=\tr(A^2)\).
The Gauss equation gives the scalar curvature identity
\[
	R=n(n-1)-S,
\]
so the constant scalar curvature condition is equivalent to the constancy of \(S\).

\textbf{S. S. Chern's conjecture} asks, with the dimension \(n\) fixed but with the closed source manifold and the immersion otherwise arbitrary, whether the possible constant values of \(S\) form a discrete set \cite{Chern1968,ChernDoCarmoKobayashi1970}.
The same formulation appears as Problem~105 in \textbf{Yau's problem} list \cite[p.~693]{Yau1982}.

Simons' rigidity inequality implies that a closed minimal hypersurface satisfying \(0\leq S\leq n\) has \(S=0\) or \(S=n\) \cite{Simons1968}.
The equality case \(S=n\) was identified independently by Chern, do Carmo, and Kobayashi and by Lawson as a Clifford minimal hypersurface \cite{ChernDoCarmoKobayashi1970,Lawson1969}.
Peng and Terng derived higher-order integral inequalities and used them to prove the next pinching gap above \(n\): if the constant \(S\) is greater than \(n\), then \(S>n+\frac{1}{12n}\) \cite{PengTerng1983a,PengTerng1983b}.
Their work initiated a long sequence of pinching results.
Among later improvements are the estimates of Ding and Xin and of Xu and his students \cite{DingXin2011,XuXu2017,LeiXuXu2021}.
Building on Peng and Terng's three-dimensional estimates, Chang completed the proof of Chern's conjecture for \(n=3\) \cite{Chang1993}.
Much of the subsequent work has concentrated on the first and second gap problems, which exclude new constant values of \(S\) near the known models.
Such pinching results isolate the first few admissible values but do not control the full value set.
Apart from Chang's work \cite{Chang1993} in dimension three and classification results under additional hypotheses, the discreteness problem remains open.

A connected hypersurface in the sphere is called \emph{isoparametric} if all of its principal curvatures are constant.
Isoparametric minimal hypersurfaces supply the principal model family for Chern's conjecture.
If \(g\) denotes the number of distinct principal curvatures, Cartan's work and M\"unzner's structure theorems show that \(g\) is restricted to \(1,2,3,4,\) or \(6\), and a minimal member has \(S=(g-1)n\) \cite{Cartan1939,Munzner1980,Munzner1981}.
These examples therefore yield the familiar values \(0,n,2n,3n,\) and \(5n\) whenever the corresponding multiplicities exist.
The stronger classification expectation that every closed minimal hypersurface with constant scalar curvature is isoparametric remains open in general.
The surveys of Ge and Tang and of Scherfner, Weiss, and Yau give broader accounts of the conjecture and its relation to isoparametric geometry \cite{GeTang2012,ScherfnerWeissYau2012}.

Although Chern's conjecture is formulated for immersed hypersurfaces, the embedded subclass is natural. It contains all of the standard isoparametric models, excludes covering multiplicities, and gives a global separation of the sphere.
Historically, however, imposing embeddedness has led to little additional progress: the principal gap and rigidity results recalled above already hold for immersions, and the conjecture remains open in general even in the embedded category.

Additional symmetric functions of the principal curvatures have proved useful in attacking this classification problem.
De Almeida and Brito proved that a closed hypersurface in \(\sphere^4\) with constant mean curvature and constant nonnegative scalar curvature is isoparametric \cite{AlmeidaBrito1990}.
Tang, Wei, and Yan extended this result to \(n>3\) under \(g=n\) everywhere and \(\int_M R d \mu\geq0\): constancy of \(f_k=\tr(A^k)\) for \(1\leq k\leq n-1\) implies isoparametricity \cite{TangWeiYan2020}.
Tang and Yan removed the condition \(g=n\), proving the same conclusion for \(n\geq4\) under \(R\geq0\) and constant \(f_k\), \(1\leq k\leq n-1\) \cite{TangYan2023}.
For constant \(f_3\), Cheng, Wei, and Yamashiro obtained a dimension-independent pinching theorem \cite{ChengWeiYamashiro2025}.
In dimension four, Deng, Gu, and Wei proved isoparametricity for closed minimal hypersurfaces with constant scalar curvature satisfying the Willmore condition \cite{DengGuWei2017}.
These results are classification or gap theorems, whereas the conclusion below concerns local finiteness of the value set in every dimension.

We now formulate the class precisely.
For \(n\geq2\), let $\mathcal{E}_n^{(3)}$ be the set of numbers \(s\geq0\) for which there exist a closed connected smooth \(n\)-manifold \(M\) and a smooth embedding \(F:M\hookrightarrow\sphere^{n+1}(1)\) such that, with respect to a global unit normal,
\[
	H=0,
	\qquad
	S\equiv s,
	\qquad
	f_3\equiv q
\]
for some real number \(q\).

\begin{theorem}\label{thm:main}
	For every integer \(n\geq2\) and every finite \(\Lambda\geq0\), the set
	\[
			\mathcal{E}_n^{(3)}\cap[0,\Lambda]
	\]
	is finite.
\end{theorem}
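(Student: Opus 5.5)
We argue by contradiction via a compactness argument. Keywords indicate the tools: a uniform rolling radius for embedded hypersurfaces, and real analytic sets. Suppose \(\mathcal{E}_n^{(3)}\cap[0,\Lambda]\) is infinite. Then there are pairwise distinct values \(s_k\to s_\infty\in[0,\Lambda]\), realized by embeddings \(F_k:M_k\hookrightarrow\sphere^{n+1}\) with \(H=0\), \(S\equiv s_k\), \(f_3\equiv q_k\). Since \(|f_3|\le S^{3/2}\), we may pass to a subsequence with \(q_k\to q_\infty\).

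\emph{Step 1: uniform geometry.} Because \(|A|^2=s_k\le\Lambda\), the curvature is uniformly bounded. Minimal hypersurfaces with bounded \(|A|\) are locally uniform graphs over balls of a fixed radius \(r_0(n,\Lambda)\). Simons' identity with constant \(S\) gives \(|\nabla A|^2=S(S-n)\), so \(|\nabla A|\) is bounded too. Elliptic bootstrapping then bounds all higher derivatives. The embedded condition is then used for a uniform rolling radius: each \(F_k(M_k)\) bounds a region \(\Omega_k\) in which balls of a fixed radius \(\rho(n,\Lambda)>0\) roll on both sides. I would prove this by a maximum principle/first-contact argument for minimal hypersurfaces, using that the two sheets at a closest touching pair are uniform graphs. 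This yields area bounds via Heintze--Karcher type volume comparison and bounded topology. Thus, after a subsequence, \(F_k(M_k)\) converge smoothly, with multiplicity one because of the rolling radius, to a closed embedded minimal hypersurface \(M_\infty\) with \(S\equiv s_\infty\) and \(f_3\equiv q_\infty\). Moreover, for large \(k\), \(M_k\) is a normal graph of a function \(u_k\to0\) smoothly over \(M_\infty\).

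\emph{Step 2: an analytic set of solutions.} The graphs of small \(u\) over \(M_\infty\) satisfying \(H=0\), \(S=\text{const}\) and \(f_3=\text{const}\) form the zero set of an analytic map. Concretely, consider
\[
(u,s,q)\mapsto\bigl(H(u),\,S(u)-s,\,f_3(u)-q\bigr)
\]
between Hölder spaces. All these quantities are real analytic in \(u\), and \(M_\infty\) is real analytic. Using the elliptic operator \(H\) together with the overdetermined system, I would perform a Lyapunov--Schmidt reduction. This should show that near \((0,s_\infty,q_\infty)\) the solution set is locally a finite-dimensional real analytic set \(Z\). For the reduction, one notes that \(\mathrm{Jacobi}=\Delta+S+n\) is Fredholm, and the constraint equations restrict to a finite-dimensional kernel. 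The projection \((u,s,q)\mapsto s\) is analytic on \(Z\). Moreover, a semianalytic/Łojasiewicz curve-selection argument shows that if it takes infinitely many values accumulating at \(s_\infty\), there is an analytic arc \(\gamma(t)\subset Z\) with \(s(\gamma(t))\) nonconstant.

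\emph{Step 3: constancy along arcs (main obstacle).} One must show \(S\) is locally constant on connected analytic families of such hypersurfaces. The Simons identity gives \(|\nabla A|^2=S(S-n)\) and a further identity involving \(f_3\), \(f_4\) and \(|\nabla^2A|^2\). Integrating gives \(S\)-dependent relations such as \(\int|\nabla A|^2=S(S-n)\Vol\). I would differentiate these integral identities along the arc. The variation of a minimal family satisfies the Jacobi equation, so the first variation of volume vanishes; the derivative \(\frac{d}{dt}\) then yields \(s'(t)\,P(s,q)\,\Vol=0\) for an explicit polynomial \(P\). Where \(P\neq0\), this forces \(s'\equiv0\). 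The exceptional locus \(P=0\), together with \(q=q(s)\), consists of finitely many values on \([0,\Lambda]\). This contradicts the infinitely many distinct \(s_k\).

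I expect Step 3 to be the hardest, specifically finding the right conserved integral identity whose derivative isolates \(s'\). If the first attempt fails, I would instead use a first-variation argument: show \(\frac{d}{dt}\) of a suitable combination of \(\int f_3\) and \(\int S\) is a boundary-free expression vanishing by the Jacobi equation, forcing \(s'=0\).
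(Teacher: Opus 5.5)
Your Steps 1 and 2 are essentially sound. In Step 1 you replace the paper's appeal to Howard's rolling theorem with a first-contact argument, and that route is viable. At a closest pair of distinct sheets, the distance $d$ to the first sheet, restricted to the second, has a local minimum. With $H=0$, its intrinsic Laplacian there equals the ambient $\Delta d=-\sum_i\tan(\theta_i+t)$, where $\lambda_i=\tan\theta_i$. This is negative, because $\sum_i\tan\theta_i=0$ and each term increases in $t$, which contradicts the minimum. Step 2 matches the paper's Lyapunov--Schmidt reduction followed by curve selection.

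The genuine gap is Step 3, which is the heart of the theorem, and the mechanism you propose there does not work.
\begin{itemize}
\item The integrated Simons identity $\int_M|\nabla A|^2\,d\mu=\int_M S(S-n)\,d\mu$ holds for every closed minimal hypersurface, since it is just $\int_M\Delta S\,d\mu=0$. Differentiating it along a family of minimal hypersurfaces therefore only reproduces an identity.
\item It would constrain $s'$ only if you computed $\frac{d}{dt}\int_M|\nabla A|^2\,d\mu$ independently from the variation formulas and it collapsed to a different multiple of $s'$. Nothing in your sketch indicates why that would happen, and the higher-order identities have the same problem.
\item Even granting some $P$ with $s'P(s,q)\Vol=0$, the exceptional-locus step fails. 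There is no relation $q=q(s)$, since the value of $f_3$ is free to vary. So an analytic arc with $P(s(t),q(t))\equiv0$ and nonconstant $s$ is not excluded.
\end{itemize}
Your fallback in the last sentence points in the right direction but is not yet an argument.

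The missing idea is a direct first-order computation on $S$ itself.
\begin{itemize}
\item Let $\phi$ be the normal speed along the arc. Tangential components only reparametrize, and they contribute nothing because $H$ and $S$ are spatially constant.
\item The variation $A'=\nabla^2\phi+\phi(A^2+I)$ gives $\frac12S'=\langle A,\nabla^2\phi\rangle+f_3\phi$, and the left side is a spatial constant.
\item Integrate over $M$. The Hessian term vanishes by Codazzi: $\int_M\langle A,\nabla^2\phi\rangle\,d\mu=-\int_M\langle\nabla H,\nabla\phi\rangle\,d\mu=0$.
\item Because the family stays minimal, $\phi$ solves $\Delta\phi+(n+S)\phi=0$ with $n+S$ a positive constant. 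Hence $\int_M\phi\,d\mu=0$, and constancy of $f_3$ gives $\int_M f_3\phi\,d\mu=f_3\int_M\phi\,d\mu=0$.
\item Therefore $S'\Vol(M)=0$ at every parameter of the arc.
\end{itemize}
Combined with curve selection, this yields the contradiction with no exceptional set at all. Both constancy hypotheses enter exactly here: constancy of $S$ lets you pull $n+S$ out of the integrated Jacobi equation, and constancy of $f_3$ lets you pull $f_3$ out of the integral.
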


Theorem~\ref{thm:main} establishes the discreteness predicted by Chern's conjecture in the stronger form of local finiteness for embedded hypersurfaces with constant \(f_3\).
Here the hypersurface is not fixed: its diffeomorphism type, its embedding, and the constant value of \(f_3\) may all vary.

\begin{remark}\label{rem}
	Chern's original conjecture concerns closed minimal submanifolds with arbitrary codimension: for fixed $n, m\geq 1$, the possible constant values of $S$ realized by closed minimally immersed $n$-submanifolds of $\sphere^{n+m}(1)$ are conjectured to form a discrete set. Most subsequent work has focused on its hypersurface case $m=1$.
	Quite recently, Firester and Tsiamis disproved the conjecture in higher codimension by constructing, for $n\geq3$ and \(m\geq4\), and also for even \(n\geq4\) and \(m\geq3\), countable families of closed embedded minimal $n$-submanifolds whose constant $S$-values are dense in a bounded interval \cite{FiresterTsiamis2026}.
	Thus embeddedness alone does not enforce discreteness in higher codimension.
    In codimension one, however, embeddedness together with a uniform bound on \(S\) yields the two-sided tubular control needed for compactness.
    Together with the constancy of $f_3$, this leads to Theorem \ref{thm:main} and provides further evidence for the hypersurface conjecture.
\end{remark}

The proof of the main theorem consists of a local part and a global part.
For the local part, consider a \(C^1\) family \(F_t\) with values in \(C^3(M,\sphere^{n+1}(1))\), such that every \(F_t\) is minimal and both \(S_t\) and \(f_{3,t}\) are spatially constant. If \(\phi\) is the normal speed, the linearized mean curvature equation and the variation of \(S\) give
\[
	0=H'=\Delta\phi+(n+S)\phi,
	\qquad
	\frac12S'=\langle A,\nabla^2\phi\rangle+f_3\phi.
\]
Integration, the Codazzi equation, and the constancy of \(f_3\) imply \(S'=0\).
This path rigidity does not by itself control a possibly singular moduli space, so we reduce the mean-curvature equation to a finite-dimensional real analytic zero set and impose constancy of \(S\) and \(f_3\) by analytic variance functionals.
The real analytic curve-selection lemma then converts any hypothetical local accumulation of \(S\)-values into an analytic path, contradicting path rigidity.

For the global part, a uniform bound for \(S\) gives a uniform bound for the principal curvatures but does not by itself give an area bound or multiplicity-one compactness.
Embeddedness supplies the missing global control.
Each hypersurface separates the sphere into two domains, and Howard's rigidity form of the rolling theorem, together with the strict positivity of the Ricci tensor, yields equality of the rolling and focal radii on each side \cite{Howard1999}.
The focal formula in the sphere then yields a uniform two-sided normal injectivity radius.
Integrating the normal Jacobian over a shorter tube gives a uniform area bound, after which Hausdorff compactness, the uniform reach bound, varifold compactness, and elliptic regularity give smooth embedded multiplicity-one compactness.
The local result at a smooth limit then completes the proof of Theorem~\ref{thm:main}.

Section~\ref{sec:variation} fixes the geometric conventions and derives the variation formulas used later.
Section~\ref{sec:path-rigidity} proves a smooth path rigidity.
Section~\ref{sec:local-rigidity} proves local constancy of the \(S\)-value on the finite-dimensional analytic set defined by the minimality and constancy conditions.
Section~\ref{sec:compactness} proves the uniform tube, area, and compactness statements.
Section~\ref{sec:global-proof} proves Theorem~\ref{thm:main} and discusses its precise scope.

\section{Conventions and variation formulas}\label{sec:variation}

Throughout the paper, \(M^n\) is closed and connected, the ambient sphere has sectional curvature one, and \(F:M\to\sphere^{n+1}(1)\) is a two-sided hypersurface, meaning that its normal line bundle is trivial. We fix a global unit normal \(\nu\).
Since \(\sphere^{n+1}(1)\) is orientable, two-sidedness is equivalent here to the orientability of \(M\). For the closed embedded hypersurfaces considered in the main theorem, two-sidedness is automatic because they separate the sphere.
Let \(\overline g\) and \(\overline\nabla\) denote the round metric on \(\sphere^{n+1}(1)\) and its Levi-Civita connection, respectively. The induced metric on \(M\) is \(g=F^*\overline g\), and its Levi-Civita connection is denoted by \(\nabla\). We use the shape-operator convention
\[
	A(X)=-\bigl(\overline\nabla_X\nu\bigr)^{\top},
\]
where \((\cdot)^{\top}\) denotes orthogonal projection onto \(TM\). We write \(h(X,Y)=\langle AX,Y\rangle\), \(H=\tr A\), \(S=\tr(A^2)\), and \(f_3=\tr(A^3)\). The Laplacian is \(\Delta=\tr\nabla^2\), so its spectrum on a closed manifold is nonpositive.

Since the unit sphere is Einstein, its ambient Ricci tensor satisfies \(\overline{\operatorname{Ric}}=n\overline g\), and hence \(\overline{\operatorname{Ric}}(\nu,X)=0\) for every \(X\in TM\).
Thus the contracted Codazzi equation reduces to
\begin{equation}\label{eq:divA}
	\operatorname{div}A=\nabla H.
\end{equation}
In particular, \(\operatorname{div}A=0\) on a minimal hypersurface.
The Gauss equation gives
\[
	R=n(n-1)+H^2-S,
\]
and hence \(R=n(n-1)-S\) when \(H=0\).

\begin{lemma}[Normal variation formulas]\label{lem:variation}
	Let \(I\subset\R\) be an interval containing \(0\), and let \(t\mapsto F_t\) be a \(C^1\) map from \(I\) into \(C^3(M,\sphere^{n+1}(1))\).
	Assume that each \(F_t\) is a two-sided immersion and that \(F_0=F\).
	Choose the global unit normals \(\nu_t\) to depend \(C^1\) on \(t\), with \(\nu_0=\nu\), and suppose that \(\left.\partial_tF_t\right|_{t=0}=\phi\nu\).
	Let \(g_t\) and \(A_t\) denote the induced metric and shape operator.
	Then
	\begin{align}
		g'&=-2\phi h,\label{eq:gvariation}\\
		A'&=\nabla^2\phi+\phi(A^2+I),\label{eq:Avariation}
	\end{align}
	where the Hessian in \eqref{eq:Avariation} is viewed as a self-adjoint endomorphism.
	If \(F\) is minimal, then
	\begin{align}
		H'&=\Delta\phi+(n+S)\phi,\label{eq:Hvariation}\\
		S'&=2\bigl(\langle A,\nabla^2\phi\rangle+f_3\phi\bigr),\label{eq:Svariation}
	\end{align}
    where, in a local $g$-orthonormal frame $\{e_i\}$, $h_{ij}=h(e_i,e_j)$, $\phi_{ij}=\nabla^2\phi(e_i,e_j)$, and $\langle A,\nabla^2\phi\rangle=\sum_{i,j}h_{ij}\phi_{ij}$. Here primes denote derivatives at \(t=0\).
\end{lemma}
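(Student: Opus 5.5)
We work in local coordinates \(x^i\) on \(M\) and write \(F_t\) as a map into \(\R^{n+2}\), with \(\langle F_t,F_t\rangle=1\). Then \(g_{ij}=\langle\partial_iF,\partial_jF\rangle\), and \(h_{ij}=\langle\partial_i\partial_jF,\nu\rangle\), because the sphere's normal \(F\) is orthogonal to \(\nu\) and the flat Hessian differs from the spherical one only by a multiple of \(F\). The plan is to differentiate these coordinate expressions in \(t\) and evaluate at \(t=0\). The regularity \(F_t\in C^3\), \(C^1\) in \(t\), is enough to commute \(\partial_t\) with up to two spatial derivatives, since \(h\) involves only second derivatives.

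\textbf{Variation of the metric.} Write \(V=\phi\nu\). Then
\[
g_{ij}'=\langle\partial_iV,\partial_jF\rangle+\langle\partial_iF,\partial_jV\rangle.
\]
Since \(\langle\nu,\partial_jF\rangle=0\), we get \(\langle\partial_i(\phi\nu),\partial_jF\rangle=\phi\langle\partial_i\nu,\partial_jF\rangle=-\phi h_{ij}\) by the convention \(A=-(\overline\nabla\nu)^\top\). Symmetrizing gives \eqref{eq:gvariation}. Here we use that the tangential part of \(\partial_i\nu\) in \(\R^{n+2}\) equals \(-A\partial_i F\), because \(\langle\partial_i\nu,F\rangle=-\langle\nu,\partial_iF\rangle=0\).

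\textbf{Variation of \(h_{ij}\) and \(A\).} We differentiate \(h_{ij}=\langle\partial_i\partial_jF_t,\nu_t\rangle\). Unit length and orthogonality to \(F_t\) and \(\partial_kF_t\) force
\[
\nu'=-\nabla\phi+c F
\]
for some scalar \(c\). Pairing \(\langle\nu_t,F_t\rangle=0\) with \(\langle\nu,V\rangle=\phi\) gives \(c=-\phi\); this \(F\)-component is the curvature-one contribution. Then
\[
h_{ij}'=\langle\partial_i\partial_j(\phi\nu),\nu\rangle+\langle\partial_i\partial_jF,\nu'\rangle .
\]
The main step is to expand these two terms using the Gauss–Weingarten formulas in \(\R^{n+2}\):
\[
\partial_i\partial_jF=\Gamma^k_{ij}\partial_kF+h_{ij}\nu-g_{ij}F,
\qquad
\partial_i\nu=-h_i^k\partial_kF .
\]
The first term becomes \(\partial_i\partial_j\phi-\phi\,h_i^kh_{kj}\). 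The second term becomes \(-\Gamma^k_{ij}\partial_k\phi+\phi g_{ij}\). Combining them gives
\[
h'_{ij}=\nabla^2_{ij}\phi-\phi(h^2)_{ij}+\phi g_{ij}.
\]
Raising an index with \((A)^i_j=g^{ik}h_{kj}\) and using \((g^{-1})'=2\phi\, g^{-1}hg^{-1}\) from \eqref{eq:gvariation} adds \(2\phi A^2\). This yields \eqref{eq:Avariation}. The main obstacle is exactly this sign bookkeeping: the \(F\)-component of \(\nu'\) and the \(-g_{ij}F\) term are what produce the \(+\phi I\). I will cross-check the result against the totally geodesic equator with constant \(\phi\), which gives \(A'=\phi I\), consistent with parallel small spheres.

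\textbf{Traces.} Taking the trace of \eqref{eq:Avariation} gives
\[
H'=\Delta\phi+\phi(S+n),
\]
which is \eqref{eq:Hvariation}. For \(S=\tr(A^2)\) we have \(S'=2\tr(AA')\), so
\[
S'=2\bigl(\langle A,\nabla^2\phi\rangle+\phi f_3+\phi H\bigr),
\]
and minimality \(H=0\) gives \eqref{eq:Svariation}. These identities are pointwise, so the trace formula for \(H'\) does not in fact need minimality; minimality is used only to drop the \(\phi H\) term in \(S'\).
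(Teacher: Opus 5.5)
Your proof is correct, and its skeleton matches the paper's. You compute \(g'\), then the variation of the normal, then \(h'=\nabla^2\phi+\phi g-\phi\,h\circ A\). You then pass to \(A'\) through the metric variation; your \((g^{-1})'=2\phi\,g^{-1}hg^{-1}\) is equivalent to the paper's differentiation of \(h_t(X,Y)=g_t(A_tX,Y)\). Finally you take traces.

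The genuine difference is the framework for the \(h'\) step. The paper works intrinsically on the sphere with \(\mathcal D_t=\overline\nabla_{\partial_tF_t}\). There \(\mathcal D_t\nu_t=-\nabla\phi\) has no extra component, but \(\mathcal D_t\) does not commute with \(\overline\nabla_X\), and the \(+\phi g\) term appears as the curvature term \(\langle\overline R(\phi\nu,X)Y,\nu\rangle\). You work with coordinate derivatives in \(\R^{n+2}\), where \(\partial_t\) commutes with \(\partial_i\partial_j\) for free. The sphere's curvature then re-enters through the position-vector components. These are the \(-g_{ij}F\) term in the Gauss formula and the \(-\phi F\) component of the Euclidean derivative \(\partial_t\nu_t\), whose projection onto \(T\sphere^{n+1}(1)\) is exactly the paper's \(-\nabla\phi\).

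Your version is more elementary, and every sign can be checked by hand; your equator check confirms them. The paper's version is coordinate-free and transfers verbatim to an arbitrary ambient manifold. There the same commutation term yields \(\overline{\operatorname{Ric}}(\nu,\nu)\phi\) in place of \(n\phi\) in \(H'\).
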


\begin{proof}
	Let \(X,Y\) be vector fields on \(M\), extended independently of \(t\), and identify \(dF_0(X)\) and \(dF_0(Y)\) with \(X\) and \(Y\). For a vector field \(Z_t\) along \(F_t\), write
	\(\mathcal D_tZ_t=\overline\nabla_{\partial_tF_t}Z_t\).
	All quantities below are evaluated at \(t=0\).

	Put \(V=\partial_tF_t|_{t=0}=\phi\nu\). Since \([\partial_t,X]=0\), the torsion-free property of the ambient connection gives
	\[
		\left.\mathcal D_t\bigl(dF_t(X)\bigr)\right|_{t=0}
		=\overline\nabla_XV
		=X(\phi)\nu-\phi AX.
	\]
	Differentiating \(g_t(X,Y)=\langle dF_t(X),dF_t(Y)\rangle\) and using the preceding identity, we obtain
	\[
		\begin{aligned}
			g'(X,Y)
			&=\langle X(\phi)\nu-\phi AX,Y\rangle
			  +\langle X,Y(\phi)\nu-\phi AY\rangle  \\
			&=-2\phi h(X,Y).
		\end{aligned}
	\]
	This proves \eqref{eq:gvariation}.

	We next determine the variation of the unit normal. Differentiating
	\(\langle\nu_t,\nu_t\rangle=1\) shows that
	\(\mathcal D_t\nu_t|_{t=0}\) is orthogonal to \(\nu\). Differentiating
	\(\langle\nu_t,dF_t(X)\rangle=0\) gives
	\[
		\left\langle\left.\mathcal D_t\nu_t\right|_{t=0},X\right\rangle
		=-\left\langle\nu,
		\left.\mathcal D_t\bigl(dF_t(X)\bigr)\right|_{t=0}\right\rangle
		=-X(\phi).
	\]
    The vector \(\left.\mathcal D_t\nu_t\right|_{t=0}\) is tangent to the ambient sphere and, by the preceding normalization identity, is orthogonal to \(\nu\).
    Since \(T_{F(x)}\sphere^{n+1}(1)=dF(T_xM)\oplus\mathbb R\nu\), it therefore belongs to \(dF(T_xM)\). The preceding identity then determines it uniquely as
	\[
		\left.\mathcal D_t\nu_t\right|_{t=0}=-\nabla\phi.
	\]

	Consider now the covariant second fundamental form
	\(h_t(X,Y)=\langle\overline\nabla_{dF_t(X)}dF_t(Y),\nu_t\rangle\).
	Differentiation yields
	\[
		h'(X,Y)
		=
		\left\langle
		\left.\mathcal D_t
		\bigl(\overline\nabla_{dF_t(X)}dF_t(Y)\bigr)\right|_{t=0},
		\nu
		\right\rangle
		+
		\left\langle\overline\nabla_XY,
		\left.\mathcal D_t\nu_t\right|_{t=0}\right\rangle .
	\]
	The curvature commutation formula and \([\partial_t,X]=0\) give
	\[
		\left.\mathcal D_t
		\bigl(\overline\nabla_{dF_t(X)}dF_t(Y)\bigr)\right|_{t=0}
		=
		\overline\nabla_X\bigl(Y(\phi)\nu-\phi AY\bigr)
		+\overline R(\phi\nu,X)Y.
	\]
	The normal component of the first term on the right is
	\(X(Y(\phi))-\phi h(X,AY)\). For the unit sphere,
	\(\overline R(U,V)W=\langle V,W\rangle U-\langle U,W\rangle V\), and therefore
	\(\langle\overline R(\phi\nu,X)Y,\nu\rangle=\phi g(X,Y)\).
	The variation of the normal contributes
	\[
		\left\langle\overline\nabla_XY,
		\left.\mathcal D_t\nu_t\right|_{t=0}\right\rangle
		=-\langle\nabla_XY,\nabla\phi\rangle.
	\]
	Combining these identities and using
	\(\nabla^2\phi(X,Y)=X(Y(\phi))-\langle\nabla_XY,\nabla\phi\rangle\), we find
	\[
		h'(X,Y)
		=\nabla^2\phi(X,Y)+\phi g(X,Y)-\phi h(X,AY).
	\]
	Equivalently,
	\[
		h'=\nabla^2\phi+\phi g-\phi h\circ A,
		\qquad
		(h\circ A)(X,Y)=h(X,AY)=\langle A^2X,Y\rangle.
	\]

	It remains to pass from the covariant tensor \(h\) to the shape operator \(A\). Differentiating \(h_t(X,Y)=g_t(A_tX,Y)\) gives
	\(h'(X,Y)=g'(AX,Y)+g(A'X,Y)\). Hence, by \eqref{eq:gvariation} and the formula for \(h'\),
	\[
		\begin{aligned}
			g(A'X,Y)
			&=h'(X,Y)-g'(AX,Y) \\
			&=\nabla^2\phi(X,Y)+\phi g(X,Y)
			  -\phi\langle A^2X,Y\rangle
			  +2\phi\langle A^2X,Y\rangle \\
			&=\nabla^2\phi(X,Y)+\phi g(X,Y)
			  +\phi\langle A^2X,Y\rangle.
		\end{aligned}
	\]
	Viewing the Hessian as a self-adjoint endomorphism therefore gives
	\[
		A'=\nabla^2\phi+\phi(A^2+I),
	\]
	which proves \eqref{eq:Avariation}. Notice that the term
	\(2\phi A^2\) arising from the variation of the metric is precisely what changes the term \(-\phi A^2\) in \(h'\) into \(+\phi A^2\) in \(A'\).

	Finally, since \(H=\tr A\), taking the trace of \eqref{eq:Avariation} gives
	\[
		H'=\Delta\phi+(n+S)\phi.
	\]
	This identity does not require minimality. Similarly, regarding \(A_t\) as a family of endomorphisms of \(TM\), we obtain
	\[
		\begin{aligned}
			S'
			&=\tr\bigl((A^2)'\bigr)
			  =2\tr(AA') \\
			&=2\langle A,\nabla^2\phi\rangle
			  +2\phi\tr(A^3)+2\phi\tr A \\
			&=2\langle A,\nabla^2\phi\rangle+2\phi f_3+2\phi H.
		\end{aligned}
	\]
	When \(F\) is minimal, \(H=0\), and the last identity reduces to
	\[
		S'=2\bigl(\langle A,\nabla^2\phi\rangle+f_3\phi\bigr).
	\]
	This proves \eqref{eq:Hvariation} and \eqref{eq:Svariation}.
\end{proof}

\begin{remark}\label{rem:tangential}
	For a general variation field \(W+\phi\nu\), the right-hand sides of the scalar variation formulas acquire the transport terms \(W(H)\) and \(W(S)\).
	These terms vanish along a family on which \(H\) is zero and \(S\) is spatially constant.
	Equivalently, a time-dependent reparametrization removes \(W\), so Lemma~\ref{lem:variation} applies to the geometric family without changing its images.
\end{remark}

\section{Rigidity along differentiable families}\label{sec:path-rigidity}

The first ingredient is an exact no-drift statement for the constant \(S\)-value along a constrained family.
To specify the regularity of such a family, we regard \(C^3(M,\sphere^{n+1}(1))\) as the Banach manifold of \(C^3\) maps from \(M\) into \(\sphere^{n+1}(1)\), with the topology induced by \(C^3(M,\mathbb R^{n+2})\).
A path \(t\mapsto F_t\) is \(C^1\) in this space if, after viewing \(F_t\) as an \(\mathbb R^{n+2}\)-valued map, the derivative \(\partial_tF_t\) exists in \(C^3(M,\mathbb R^{n+2})\) and depends continuously on \(t\) in the \(C^3\) norm.
The resulting rigidity statement does not require the maps \(F_t\) to be embeddings.

\begin{proposition}\label{prop:path-rigidity}
	Let \(I\subset\R\) be an interval, and let \(t\mapsto F_t\) be a \(C^1\) map from \(I\) into \(C^3(M,\sphere^{n+1}(1))\).
	Assume that every \(F_t\) is a two-sided minimal immersion and that both \(S_t\) and \(f_{3,t}\) are spatially constant on \(M\).
	Then the function \(t\mapsto S_t\) is constant on \(I\).
\end{proposition}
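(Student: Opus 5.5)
It suffices to show that \(\frac{d}{dt}S_t=0\) at every \(t_0\in I\); since \(t\mapsto S_t\) will be continuous (indeed \(C^1\), as \(A_t\) depends \(C^1\) on \(t\) in \(C^1\) and \(S_t\) is a spatial constant), this gives constancy on the interval. Fix \(t_0\) and translate so that \(t_0=0\). I first decompose \(\partial_tF_t|_{t=0}=W+\phi\nu\) into tangential and normal parts. Here \(\phi=\langle\partial_tF_t,\nu\rangle\in C^2(M)\) at least, since \(\nu\) is \(C^2\) when \(F\) is \(C^3\). By Remark~\ref{rem:tangential}, the tangential part contributes only \(W(H)\) and \(W(S)\) to the derivatives of \(H\) and \(S\). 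Both vanish, because \(H\equiv0\) and \(S\) is spatially constant. Lemma~\ref{lem:variation} therefore applies with normal speed \(\phi\). If the \(C^3\) regularity is too weak to differentiate \(\nabla^2\phi\) classically, I will work with the formulas only in integrated form, so that one derivative of \(\phi\) suffices.

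Since \(H_t\equiv0\) for all \(t\), we have \(H'=0\), and \eqref{eq:Hvariation} gives \(\Delta\phi+(n+S)\phi=0\). The key computation is to integrate \eqref{eq:Svariation} over \(M\) with respect to the volume form of \(g=g_0\). Because \(S'\) is a constant, this gives
\[
\tfrac12 S'\,\Vol(M)=\int_M\langle A,\nabla^2\phi\rangle\,d\mu+f_3\int_M\phi\,d\mu .
\]
In the first term I integrate by parts twice. The Codazzi identity \eqref{eq:divA} with \(H=0\) gives \(\operatorname{div}A=0\), so \(\int_M\langle A,\nabla^2\phi\rangle\,d\mu=-\int_M\langle \operatorname{div}A,\nabla\phi\rangle\,d\mu=0\). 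Only one integration by parts is actually needed, and it requires only \(\phi\in C^1\) paired with the \(C^1\) tensor \(A\), so the regularity concern disappears.

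For the second term, integrating the linearized equation gives \((n+S)\int_M\phi\,d\mu=-\int_M\Delta\phi\,d\mu=0\). Since \(n+S\ge n>0\), this forces \(\int_M\phi\,d\mu=0\). Hence \(\tfrac12S'\Vol(M)=0\), so \(S'=0\). Notice that the constancy of \(f_3\) is exactly what allows it to be pulled out of the integral. The Jacobi equation is used only through its mean; \(\phi\) need not be a global Jacobi field in any stronger sense.

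The main obstacle is the justification step rather than the algebra. We must ensure that \(t\mapsto S_t\) is differentiable and that its derivative equals the pointwise derivative computed in Lemma~\ref{lem:variation} for a family that is merely \(C^1\) into \(C^3\) and possibly has tangential velocity. I would handle this by noting that \(A_t\), viewed in a fixed frame, is a \(C^1\) function of \(t\) with values in \(C^0\), which makes \(S_t(x)\) differentiable in \(t\) for each \(x\). Applying the pointwise formula, including the transport term \(W(S)=0\), then yields the result.
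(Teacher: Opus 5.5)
Your proposal is correct and follows the paper's proof essentially step for step. You discard the tangential component via Remark~\ref{rem:tangential}, integrate the Jacobi equation to get \(\int_M\phi\,d\mu=0\), integrate \eqref{eq:Svariation}, kill the Hessian term using \(\operatorname{div}A=0\), and pull the constant \(f_3\) out of the integral to conclude \(S'=0\). Your additional regularity remarks (\(\phi\in C^2\), \(A\in C^1\)) are consistent with the \(C^1\)-into-\(C^3\) hypothesis and simply make explicit what the paper leaves implicit.
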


\begin{proof}
    Fix \(t\in I\). On a parameter neighborhood of \(t\), choose the global unit normals to depend \(C^1\)-smoothly on the parameter. Write
    \[
        \partial_tF_t=dF_t(V)+\phi\nu=:W+\phi\nu,
    \]
    where \(V\in\Gamma(TM)\), \(W=dF_t(V)\) is the tangential component, and \(\phi=\langle\partial_tF_t,\nu\rangle\) is the normal speed. The tangential component changes only the parametrization, so Remark~\ref{rem:tangential} allows us to discard its transport terms. In the calculation below, all geometric quantities and the measure \( d \mu\) are those of the fixed member \(F_t\); the subscript \(t\) is suppressed.
	Because the family is minimal, Equation~\eqref{eq:Hvariation} gives
	\begin{equation}\label{eq:jacobi-family}
		\Delta\phi+(n+S)\phi=0.
	\end{equation}
	Integrating \eqref{eq:jacobi-family} over the closed hypersurface yields
    \begin{equation}\label{eq:mean-zero-speed}
        \int_M\phi d \mu=0,
    \end{equation}
    since \(S\) is spatially constant, \(S\geq0\), and hence \(n+S\) is a positive constant that may be taken outside the integral.
	Since \(S_t\) is spatially constant, its derivative is a real number, and Equation~\eqref{eq:Svariation} may be written as
	\begin{equation}\label{eq:c-definition}
		\frac12\frac{d S_t}{d t}=\langle A,\nabla^2\phi\rangle+f_3\phi=:c(t).
	\end{equation}
	The contracted Codazzi identity \eqref{eq:divA} and minimality give
	\begin{equation}\label{eq:hessian-integral-zero}
		\int_M\langle A,\nabla^2\phi\rangle d \mu
		=-\int_M\langle\operatorname{div}A,\nabla\phi\rangle d \mu
		=0.
	\end{equation}
	Integrating \eqref{eq:c-definition}, using \eqref{eq:hessian-integral-zero}, the spatial constancy of \(f_3\), and \eqref{eq:mean-zero-speed} gives
	\[
		c(t)\Vol(M)=f_3\int_M\phi d \mu=0.
	\]
	Thus \(c(t)=0\) at every \(t\), and hence \(S_t\) is constant on \(I\).
\end{proof}

\begin{remark}\label{rem:f3-essential}
	Without the spatial constancy of \(f_3\), the same calculation gives
	\[
		\frac12S'\Vol(M)=\int_M\phi f_3 d \mu,
	\]
	and neither the Jacobi equation nor \(\int_M\phi d \mu=0\) forces the right-hand side to vanish.
	This is the precise point at which the constant-\(f_3\) hypothesis enters the local argument.
\end{remark}

\begin{remark}\label{rem:range-condition}
	The preceding observation has a natural Fredholm formulation.
	Fix \(t\in I\), and let \(L_t=\Delta_t+n+S_t\) be the Jacobi operator of \(F_t\).
	By \eqref{eq:jacobi-family}, the normal speed \(\phi\) belongs to \(\ker L_t\).
	Since \(L_t\) is self-adjoint and Fredholm on the closed manifold \(M\), the Fredholm alternative gives
	\[
		f_{3,t}\perp_{L^2(M,g_t)}\ker L_t
		\quad\Longleftrightarrow\quad
		f_{3,t}\in\operatorname{Ran}L_t
		=
		\operatorname{Ran}(\Delta_t+n+S_t).
	\]
	Thus the weaker range condition already forces the integral in Remark~\ref{rem:f3-essential} to vanish and yields \(\frac{d S_t}{d t}=0\).
	If \(f_{3,t}\) is spatially constant, then \(f_{3,t}=L_t\bigl(f_{3,t}/(n+S_t)\bigr)\), so the hypothesis of Proposition~\ref{prop:path-rigidity} implies the range condition.
	Consequently, the constant-\(f_3\) assumption in Theorem~\ref{thm:main} is a convenient sufficient condition for path rigidity, whereas the range condition above provides a weaker sufficient condition expressed in spectral terms.
	Extending Theorem~\ref{thm:main} to the range condition requires showing, through the finite-dimensional reduction of Section~\ref{sec:local-rigidity}, that this condition defines a semianalytic constrained set.
    The local rigidity statement must then be formulated relative to this set, so that the global compactness argument imposes the range condition on the approximating hypersurfaces without requiring it to pass to the limiting hypersurface.
\end{remark}

\section{Analytic local rigidity}\label{sec:local-rigidity}

We next pass from Proposition~\ref{prop:path-rigidity}, which concerns differentiable paths, to a neighborhood statement without assuming that the local zero set of the mean curvature operator is a manifold.
The finite-dimensional reduction used below is the standard Lyapunov--Schmidt construction for the mean-curvature operator, consistent with the general Fredholm description of minimal submanifolds in \cite{White1991}.

Fix a smooth closed embedded minimal hypersurface \(F:M\hookrightarrow\sphere^{n+1}(1)\), choose a global unit normal \(\nu\), and fix \(k\geq4\) and \(0<\alpha<1\).
Here \(C^{k,\alpha}(M)\) denotes the usual H\"older space of real-valued functions on \(M\), defined with respect to the induced metric \(g\).
For \(u\in C^{k,\alpha}(M)\) sufficiently small, the corresponding spherical normal graph is
\begin{equation}\label{eq:normal-graph}
	F_u(x)=\cos(u(x))F(x)+\sin(u(x))\nu(x).
\end{equation}
For \(u\) sufficiently small, \(F_u\) is an embedding. Up to reparametrization, every nearby embedding is uniquely represented as such a normal graph.
Let \(\pi_F\) be the nearest-point projection from a tubular neighborhood of \(F(M)\) onto \(F(M)\).
If an embedding \(G:M\to\sphere^{n+1}(1)\) is sufficiently close to \(F\) in \(C^{k,\alpha}\), then
\[
	p_G=F^{-1}\circ\pi_F\circ G:M\longrightarrow M
\]
is a \(C^{k,\alpha}\) diffeomorphism. The reparametrized embedding \(G\circ p_G^{-1}\) meets each normal fiber of \(F(M)\) once and hence has a unique representation \(F_u\) of the form \eqref{eq:normal-graph}.
Writing \(\operatorname{Emb}^{k,\alpha}(M,\sphere^{n+1}(1))\) for the space of \(C^{k,\alpha}\) embeddings and \(\operatorname{Diff}^{k,\alpha}(M)\) for the group of \(C^{k,\alpha}\) diffeomorphisms of \(M\), acting on embeddings by precomposition, the small normal graphs form a local slice for \(\operatorname{Emb}^{k,\alpha}(M,\sphere^{n+1}(1))\big/\operatorname{Diff}^{k,\alpha}(M)\).

Let \(\mathcal U\subset C^{k,\alpha}(M)\) be a sufficiently small open neighborhood of the origin.
For \(u\in\mathcal U\), set
\[
	P_u=\cos u\,I-\sin u\,A,
	\qquad
	Q_u=\sin u\,I+\cos u\,A,
	\qquad
	\eta_u=-\sin u\,F+\cos u\,\nu.
\]
Since \(P_0=I\), \(P_u\) is invertible for \(u\) sufficiently small.
Taking \(\mathcal U\) smaller if necessary, we may also assume that \(F_u\) is an embedding for every \(u\in\mathcal U\).
Differentiating \eqref{eq:normal-graph} and using \(d\nu(X)=-AX\) give
\[
	dF_u(X)=P_uX+du(X)\eta_u
\]
for every \(X\in T_xM\).
All gradients and norms below are taken with respect to \(g\).
Set \(\xi_u=P_u^{-1}\nabla u\).
Since \(\xi_u\) is orthogonal to both \(F\) and \(\nu\), the vectors \(\eta_u\) and \(\xi_u\) are tangent to \(\sphere^{n+1}(1)\) at \(F_u(x)\).
Moreover, since \(P_u\) is self-adjoint, we have
\[
	\left\langle \eta_u-\xi_u,dF_u(X)\right\rangle
	=du(X)-\left\langle P_u\xi_u,X\right\rangle
	=du(X)-\left\langle\nabla u,X\right\rangle
	=0.
\]
Since \(\eta_u\perp\xi_u\) and \(|\eta_u|=1\), we have \(|\eta_u-\xi_u|^2=1+|\xi_u|^2\).
Thus a unit normal to \(F_u\) is
\[
	\nu_u
	=
	\frac{\eta_u-P_u^{-1}\nabla u}
	{\sqrt{1+\left|P_u^{-1}\nabla u\right|^2}}.
\]
We choose the sign so that \(\nu_0=\nu\).
Let \(g_u\), \(h_u\), and \(d\mu_u\) be the induced metric, scalar second fundamental form, and volume measure, respectively, pulled back to \(M\), so that
\[
	\begin{aligned}
		g_u(X,Y)
		=
		\left\langle P_uX,P_uY\right\rangle
		+
		du(X)du(Y),\quad
		h_u(X,Y)
		=
		\left\langle
		\overline\nabla_{dF_u(X)}dF_u(Y),
		\nu_u
		\right\rangle.
	\end{aligned}
\]
Let \(A_u\) be the shape operator determined by \(g_u(A_uX,Y)=h_u(X,Y)\), or equivalently \(A_u=g_u^{-1}h_u\).
Set \(S_u=\tr(A_u^2)\) and \(f_{3,u}=\tr(A_u^3)\).
If \(\widehat H_u:F_u(M)\to\R\) denotes the mean curvature with respect to \(\nu_u\), define
\[
	\cH:\mathcal U\longrightarrow C^{k-2,\alpha}(M),
	\qquad
	\cH(u)=\widehat H_u\circ F_u=\operatorname{tr}_{g_u}h_u.
\]
Thus \(\cH(u)=0\) if and only if \(F_u\) is minimal, and the minimality of \(F\) gives \(\cH(0)=0\).
For the computation below, set \(v_u=\sqrt{1+\left|P_u^{-1}\nabla u\right|^2}\).
Let \(e_1,\ldots,e_n\) be a local \(g\)-orthonormal frame and write \(u_i=\nabla_{e_i}u\) and \(u_{ij}=\nabla^2u(e_i,e_j)\).
Let \(D\) denote the Euclidean connection of \(\R^{n+2}\).
Since \(\nu_u\perp F_u\), the second fundamental form may be computed with \(D\) in place of the spherical connection.
The Gauss formula for \(F\), together with the definitions of \(P_u,Q_u\), and \(\eta_u\), gives
\[
    \begin{aligned}
        D_X\eta_u&=-Q_uX-du(X)F_u,\\
        D_X(P_uY)
        &=P_u\nabla_XY-du(X)Q_uY-\sin u\,(\nabla_XA)Y\\
        &\quad+h(X,P_uY)\nu-\langle X,P_uY\rangle F.
    \end{aligned}
\]
Since \(h_u\) is tensorial, we may compute at a point where \(\nabla_{e_i}e_j=0\).
Using \(dF_u(e_i)=P_ue_i+u_i\eta_u\) and the formula for \(\nu_u\) in the definition of \(h_u\), together with the identities above, we obtain
\[
	(h_u)_{ij}
	=
	\frac{1}{v_u}u_{ij}
	+\frac{1}{v_u}\Big(
	\langle Q_ue_i,P_ue_j\rangle
	+u_i\langle Q_ue_j,\xi_u\rangle
	+u_j\langle Q_ue_i,\xi_u\rangle
	+\sin u\,\langle(\nabla_{e_i}A)e_j,\xi_u\rangle
	\Big).
\]
When \(u\) is constant, this formula reduces to the parallel hypersurface identity \(A_u=P_u^{-1}Q_u\), which also fixes the sign convention.
On the other hand, \((g_u)_{ij} = \left\langle P_ue_i,P_ue_j\right\rangle + u_i u_j\), so both \((g_u)_{ij}\) and its inverse \((g_u)^{ij}\) depend only on \(x,u,\nabla u\). Consequently
\[
    \begin{aligned}
        \cH(u)
        &=
        \frac{1}{v_u}(g_u)^{ij}u_{ij} \\
        &\quad +
        \frac1{v_u}(g_u)^{ij}
        \Big(
        \langle Q_ue_i,P_ue_j\rangle
        +u_i\langle Q_ue_j,\xi_u\rangle
        +u_j\langle Q_ue_i,\xi_u\rangle
        +\sin u\,
        \langle(\nabla_{e_i}A)e_j,\xi_u\rangle
    \Big).
    \end{aligned}
\]
Thus \(\cH\) is a quasilinear second-order operator.
The formulas above show that \(g_u\) and \(\frac{d\mu_u}{d\mu}\) depend only on \(u\) and \(\nabla u\), while \(h_u\), and hence \(A_u\), \(S_u\), and \(f_{3,u}\), also involve \(\nabla^2u\).
Thus \(g_u, \frac{d\mu_u}{d\mu}\in C^{k-1,\alpha}\) and \(h_u, A_u, S_u, f_{3,u}\in C^{k-2,\alpha}\).
On \(\mathcal U\), both \(P_u\) and \(g_u\) remain uniformly invertible.
Matrix inversion and the positive square root are analytic on their respective domains, while H\"older spaces are Banach algebras under pointwise multiplication.
The formulas above therefore show that all these quantities depend real analytically on \(u\), with values in the stated H\"older spaces.
For analytic composition operators in Schauder spaces, see \cite[Ch.~II]{Valent1988}.
In particular, \(\cH:\mathcal U\to C^{k-2,\alpha}(M)\) is real analytic near the origin.
Lemma~\ref{lem:variation} gives its derivative
\begin{equation}\label{eq:jacobi-operator}
	D\cH(0)=L:=\Delta+n+S.
\end{equation}

For smooth real-valued functions \(u,v\) on \(M\), integration by parts, together with the fact that \(M\) has no boundary and \(n+S\) is real-valued, gives
\[
	\int_M vLu d \mu
	=
	-\int_M\langle\nabla v,\nabla u\rangle d \mu
	+\int_M(n+S)uv d \mu
	=
	\int_M uLv d \mu.
\]
Thus \(L\) is formally self-adjoint. Since its principal part is \(\Delta\), it is also elliptic. Standard elliptic Schauder theory on the closed manifold \(M\) shows that \(L:C^{k,\alpha}(M)\to C^{k-2,\alpha}(M)\) is Fredholm: its kernel is finite-dimensional, its range is closed, and the range has finite codimension in the codomain. We write \(\operatorname{Ran}L\) for this range.

The Fredholm alternative, formal self-adjointness, and elliptic regularity imply that \(f\in C^{k-2,\alpha}(M)\) belongs to \(\operatorname{Ran}L\) if and only if
\(\int_M f\varphi d \mu=0\) for every \(\varphi\in\ker L\). It follows that
\(\operatorname{codim}\operatorname{Ran}L=\dim\ker L\). Hence the Fredholm index of \(L\), defined by
\(\operatorname{ind}L:=\dim\ker L-\operatorname{codim}\operatorname{Ran}L\), is zero.

Put \(K=\ker L\). Elliptic regularity shows that every element of the finite-dimensional space \(K\) is smooth. Let \(K^{\perp_{L^2}}\) denote its orthogonal complement with respect to the \(L^2(M,g)\) inner product, and set
\[
	X=C^{k,\alpha}(M)\cap K^{\perp_{L^2}},
	\qquad
	Y=C^{k-2,\alpha}(M)\cap K^{\perp_{L^2}}.
\]
Then \(C^{k,\alpha}(M)=K\oplus X\) and \(C^{k-2,\alpha}(M)=K\oplus Y\). Since \(L\) vanishes on \(K\), the preceding characterization of its range shows that \(L:X\to Y\) is bijective.
The Schauder estimate then gives a bounded inverse \(L^{-1}:Y\to X\).

Let \(P\) be the \(L^2\)-orthogonal projection onto \(K\), and put \(Q=I-P\).
Choosing an \(L^2\)-orthonormal basis of the smooth finite-dimensional space \(K\) shows that \(P\) is bounded on both \(C^{k,\alpha}(M)\) and \(C^{k-2,\alpha}(M)\).
The analytic implicit function theorem therefore supplies neighborhoods \(U_K\subset K\) and \(U_X\subset X\) of zero and a real analytic map
\[
	\Psi:U_K\longrightarrow U_X,
	\qquad
	\Psi(0)=0,
	\qquad
	D\Psi(0)=0,
\]
such that
\begin{equation}\label{eq:complement-equation}
	Q\cH(z+w)=0
	\quad\Longleftrightarrow\quad
	w=\Psi(z)
\end{equation}
whenever \(z+w\) is sufficiently small.
Define the finite-dimensional obstruction map
\begin{equation}\label{eq:kuranishi-map}
	\kappa:U_K\longrightarrow K,
	\qquad
	\kappa(z)=P\cH\bigl(z+\Psi(z)\bigr).
\end{equation}
It follows from \eqref{eq:complement-equation} and \eqref{eq:kuranishi-map} that the small minimal graphs are exactly those represented by \(z\in U_K\) with \(\kappa(z)=0\).

For \(u\in\mathcal U\), denote the \( d \mu_u\)-averages of \(S_u\) and \(f_{3,u}\) by
\begin{equation}\label{eq:averages}
	\overline S(u)
	=
	\frac{\int_M S_u d \mu_u}{\int_M d\mu_u},
	\qquad
	\overline f_3(u)
	=
	\frac{\int_M f_{3,u} d \mu_u}{\int_M d \mu_u}.
\end{equation}
Their deviations from these averages are measured by
\begin{align}
	\cV_S(u)
	&=
	\int_M\bigl(S_u-\overline S(u)\bigr)^2 d \mu_u,
	\label{eq:variance-S}\\
	\cV_3(u)
	&=
	\int_M\bigl(f_{3,u}-\overline f_3(u)\bigr)^2 d \mu_u.
	\label{eq:variance-f3}
\end{align}
The preceding formulas show that \(\overline S(u)\), \(\overline f_3(u)\), \(\cV_S(u)\), and \(\cV_3(u)\) depend real analytically on \(u\) near the origin. Here we use the Banach-algebra property of H\"older spaces and the fact that \(\int_M d \mu_u>0\).
Since the integrands in \eqref{eq:variance-S} and \eqref{eq:variance-f3} are continuous and nonnegative, \(\cV_S(u)=0\) exactly when \(S_u\) is spatially constant, and \(\cV_3(u)=0\) exactly when \(f_{3,u}\) is spatially constant.

\begin{theorem}\label{thm:local-rigidity}
    Let \(F:M\hookrightarrow\sphere^{n+1}(1)\) be a closed connected embedded minimal hypersurface with \(S\equiv s_0\) and constant \(f_3\).
    Then there exists \(\varepsilon>0\) such that, for every \(u\in C^{k,\alpha}(M)\) with \(\|u\|_{C^{k,\alpha}}<\varepsilon\), if \(F_u\) is minimal and both \(S_u\) and \(f_{3,u}\) are spatially constant, then \(S_u\equiv s_0\).
    Equivalently, the same conclusion holds in the local normal slice described above, and hence in a neighborhood of \([F]\) in \(\operatorname{Emb}^{k,\alpha}(M,\sphere^{n+1}(1)) \big/ \operatorname{Diff}^{k,\alpha}(M)\).
\end{theorem}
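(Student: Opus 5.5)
We reduce everything to the finite-dimensional real analytic set
\[
	Z=\{z\in U_K:\ \kappa(z)=0,\ \cV_S(z+\Psi(z))=0,\ \cV_3(z+\Psi(z))=0\}.
\]
By \eqref{eq:complement-equation} and \eqref{eq:kuranishi-map}, together with the properties of the variance functionals, a small \(u\) gives a minimal \(F_u\) with \(S_u\) and \(f_{3,u}\) spatially constant if and only if \(u=z+\Psi(z)\) for some \(z\in Z\). Indeed, minimality gives \(Q\cH(u)=0\), so with \(z=Pu\) and \(w=Qu\) we get \(w=\Psi(z)\); and \(\|z\|\) is small when \(\|u\|_{C^{k,\alpha}}\) is small, since \(P\) is bounded. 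The maps \(\kappa\), \(z\mapsto\cV_S(z+\Psi(z))\) and \(z\mapsto\cV_3(z+\Psi(z))\) are real analytic on the finite-dimensional space \(K\), as compositions of the analytic maps established above. Hence \(Z\) is a real analytic subset of a neighborhood of \(0\) in \(K\cong\R^{\dim K}\), and \(0\in Z\). The function \(\sigma(z)=\overline S(z+\Psi(z))\) is also real analytic, and on \(Z\) it equals the constant value of \(S\).

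We argue by contradiction. Suppose no \(\varepsilon\) works. Then there are \(u_j\to0\) in \(C^{k,\alpha}\) satisfying the hypotheses with \(S_{u_j}\neq s_0\). Their components \(z_j=Pu_j\) lie in \(Z\setminus\{\sigma=s_0\}\) and converge to \(0\). The set \(Z\cap\{\sigma\neq s_0\}\) is semianalytic, and \(0\) lies in its closure. The real analytic curve selection lemma (Łojasiewicz) then gives a real analytic curve \(\gamma:[0,\delta)\to K\) with \(\gamma(0)=0\) and \(\gamma(t)\in Z\), \(\sigma(\gamma(t))\neq s_0\) for \(t>0\). Set \(u_t=\gamma(t)+\Psi(\gamma(t))\) and \(F_t=F_{u_t}\).

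The map \(t\mapsto u_t\) is analytic, hence \(C^1\), into \(C^{k,\alpha}(M)\). Since \(F_u\) is given by \eqref{eq:normal-graph} through an analytic composition, \(t\mapsto F_t\) is \(C^1\) into \(C^{k,\alpha}\subset C^3(M,\sphere^{n+1}(1))\), using \(k\geq4\). Each \(F_t\) is a two-sided minimal immersion, in fact an embedding, with \(S_t\) and \(f_{3,t}\) spatially constant. The unit normals \(\nu_{u_t}\) from the explicit formula depend \(C^1\) on \(t\). Proposition~\ref{prop:path-rigidity} on \([0,\delta)\) therefore gives \(S_t\equiv S_0=s_0\). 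This contradicts \(\sigma(\gamma(t))\neq s_0\) for \(t>0\).

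The main obstacle is the regularity bookkeeping in the application of Proposition~\ref{prop:path-rigidity}. We must check that the curve \(t\mapsto F_t\) is genuinely \(C^1\) into \(C^3\) including the endpoint \(t=0\), where the one-sided interval is allowed, and that curve selection applies to the semianalytic set cut out by the analytic equations \(\kappa=0\), \(\cV_S=\cV_3=0\) and the inequation \(\sigma\neq s_0\). We can sidestep the inequation: take \(\gamma\) inside \(Z\) through points \(z_j\) with \(\sigma(z_j)\neq s_0\). Along an analytic arc, the analytic function \(\sigma\circ\gamma\) is either constant or not; Proposition~\ref{prop:path-rigidity} forces it to be constant, equal to \(\sigma(0)=s_0\), which again contradicts the choice of the arc. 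The equivalent formulation on the quotient \(\operatorname{Emb}^{k,\alpha}/\operatorname{Diff}^{k,\alpha}\) then follows from the slice property, since \(S\) is invariant under reparametrization.
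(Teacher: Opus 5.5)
Your proposal is correct and follows the paper's proof essentially step by step. Both use the same finite-dimensional analytic set \(Z\), apply curve selection to the semianalytic set where the \(S\)-average differs from \(s_0\), and reach a contradiction via Proposition~\ref{prop:path-rigidity}. The paper splits into the cases \(>s_0\) and \(<s_0\) and works on the open interval followed by continuity, which is equivalent to your use of \([0,\delta)\).

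The one flaw is your closing ``sidestep'' remark. Curve selection does not in general give an analytic arc in \(Z\) through prescribed points \(z_j\). The remark is unnecessary anyway, because curve selection already handles the inequation \(\sigma\neq s_0\) directly, as in your main argument.
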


\begin{proof}
		Choose linear coordinates on the finite-dimensional space \(K\), and define
	\[
		u(z)=z+\Psi(z),
		\qquad
		s(z)=\overline S(u(z)),
	\]
	and consider the real analytic set
	\begin{equation*}
		Z=\left\{z\in U_K:
		\kappa(z)=0,
		\ \cV_S(u(z))=0,
		\ \cV_3(u(z))=0
		\right\}.
	\end{equation*}
    By construction, \(z\in Z\) if and only if \(F_{u(z)}\) is minimal and both \(S_{u(z)}\) and \(f_{3,u(z)}\) are spatially constant.
	Assume otherwise.
    Then there is a sequence \(z_j\in Z\) converging to \(0\) such that \(s(z_j)\neq s(0)=s_0\).
	After passing to a subsequence, either \(s(z_j)>s_0\) for every \(j\) or \(s(z_j)<s_0\) for every \(j\).
	We consider the first case; the second is handled in the same way.
	The origin then lies in the closure of the semianalytic set
	\[
		Z_+=Z\cap\{z:s(z)>s_0\}.
	\]
    The set \(Z_+\) is semianalytic, hence subanalytic, so Hironaka's curve selection lemma \cite[Proposition~3.9, p.~482]{Hironaka1973} gives \(\varepsilon>0\) and a real analytic curve \(\gamma:(-\varepsilon,\varepsilon)\to U_K\) such that \(\gamma(0)=0\) and \(\gamma((0,\varepsilon))\subset Z_+\).
    For \(0<t<\varepsilon\), the graphs \(F_{u(\gamma(t))}\) form a real analytic family of minimal embeddings with spatially constant \(S\) and \(f_3\).
    Since \(k\geq4\) was fixed above, this family is \(C^1\) with values in \(C^3(M,\sphere^{n+1}(1))\).
    Proposition~\ref{prop:path-rigidity} therefore applies on \((0,\varepsilon)\).
	Proposition~\ref{prop:path-rigidity} shows that \(s(\gamma(t))\) has zero derivative for every \(0<t<\varepsilon\), and hence is constant on that interval.
    Since \(\gamma(t)\to0\) as \(t\to0\), continuity of \(s\) gives \(s(\gamma(t))=s_0\) for \(0<t<\varepsilon\). This contradicts \(\gamma(t)\in Z_+\), where \(s(\gamma(t))>s_0\).
    Hence every sufficiently small normal graph that is minimal and has spatially constant \(S\) and \(f_3\) satisfies \(S\equiv s_0\).
    The local normal-graph representation then gives the same conclusion for embeddings sufficiently close to \(F\), modulo reparametrization.
\end{proof}

\begin{remark}\label{rem:singular-moduli}
    The Lyapunov--Schmidt reduction allows for a nontrivial Jacobi kernel \(K=\ker L\).
    Curve selection handles possible singularities of the finite-dimensional constrained analytic set \(Z\).
	Nontriviality of \(K\) alone does not imply that the corresponding point of the minimal moduli space is singular.
    Vanishing of the derivative along smooth paths alone does not exclude the accumulation of different \(S\)-values along singular branches.
\end{remark}

\section{Uniform tubes and smooth embedded compactness}\label{sec:compactness}

We now prove the global compactness statement needed to allow the source manifold and its topology to vary.
For a closed embedded hypersurface \(F:M\hookrightarrow\sphere^{n+1}(1)\), define its spherical normal injectivity radius to be the supremum of the numbers \(r>0\) for which
\[
	E:M\times(-r,r)\longrightarrow\sphere^{n+1}(1),
	\qquad
	E(x,t)=\cos t\,F(x)+\sin t\,\nu(x),
\]
is injective and nonsingular.

\begin{lemma}\label{lem:uniform-tube}
	Let \(F:M^n\hookrightarrow\sphere^{n+1}(1)\) be a closed connected embedded minimal hypersurface satisfying \(|A|\leq K\).
	Define
	\begin{equation}\label{eq:rK}
		r_K=
		\begin{cases}
			\arctan(K^{-1}),&K>0,\\
			\pi/2,&K=0.
		\end{cases}
	\end{equation}
	Then the spherical normal injectivity radius of \(F(M)\) is at least \(r_K\).
\end{lemma}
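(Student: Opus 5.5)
The plan is to take the largest tube on which \(E\) is a diffeomorphism and show that it cannot stop before radius \(r_K\). The tube can stop for one of two reasons. The first is that a focal point appears, which makes \(E\) singular. The second is that two distinct normal geodesics meet, which makes \(E\) non-injective. I will rule out the first reason by the explicit parallel-hypersurface formula. For the second I will use embeddedness and minimality, which is the rigidity content of Howard's rolling theorem \cite{Howard1999} in an ambient space with \(\overline{\operatorname{Ric}}=n\,\overline g>0\).

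\emph{Step 1 (no focal points before \(r_K\)).} Specialize the normal-graph formulas of Section~\ref{sec:local-rigidity} to the constant function \(u\equiv t\). Then \(dE(\cdot,t)|_{TM}=P_t=\cos t\,I-\sin t\,A\), and \(E\) is nonsingular at \((x,t)\) if and only if \(1-\kappa_i\tan t\neq0\) for every principal curvature \(\kappa_i\). Since \(|\kappa_i|\leq|A|\leq K\), this holds for \(|t|<r_K\), because there \(|\tan t|<K^{-1}\). The identity \(A_t=P_t^{-1}Q_t\) then gives the principal curvatures of the parallel hypersurface \(M_t=E(M,t)\), with respect to \(\eta_t\):
\[
	\kappa_i(t)=\frac{\tan t+\kappa_i}{1-\kappa_i\tan t}.
\]
Using \(\sum\kappa_i=0\), one computes
\[
	H(t)=\tan t\sum_i\frac{1+\kappa_i^2}{1-\kappa_i\tan t}.
\]
Hence \(H(t)\) has the sign of \(t\) for \(0<|t|<r_K\). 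In words, each parallel hypersurface is strictly mean convex in the direction of the flow, pointing away from \(M\).

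\emph{Step 2 (no collisions).} The embedded \(M\) separates the sphere into two domains \(\Omega_\pm\), with \(\nu\) pointing into \(\Omega_+\). Let \(\rho\) be the normal injectivity radius and suppose \(\rho<r_K\). By Step~1 and compactness, \(E\) is a nonsingular injective map on \(M\times(-\rho,\rho)\). Injectivity fails at \(\rho\) only because two distinct normal segments, each of length at most \(\rho\), end at a common point \(p\). A short geodesic that crosses \(M\) again earlier would itself produce an earlier collision, so the normal segments of length \(<\rho\) stay inside their own side. I then separate two cases.

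If the two segments come from the same side, say \(\Omega_+\), then \(p\) lies on two sheets of the level set \(\{d(\cdot,M)=\rho\}\) that touch at \(p\). Minimality of \(\rho\) forces these two sheets to have opposite flow normals \(\eta_\rho\) at \(p\), so the segments glue into a double-normal chord of length \(2\rho\). Each sheet bounds the region \(\{d<\rho\}\) on its own side, so near \(p\) the two sheets lie on opposite sides of each other and are tangent at \(p\). Each sheet has \(H>0\) with respect to its own \(\eta_\rho\) by Step~1. Measured with a common normal, this gives two tangent hypersurfaces, one lying on one side of the other, whose mean curvatures have the wrong order. This contradicts the comparison (maximum) principle for the mean curvature operator.

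If the two segments come from opposite sides, then \(p\) would lie in both \(\Omega_+\) and \(\Omega_-\). This is impossible, since the segments of length \(<\rho\) do not leave their sides.

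Hence \(\rho\geq r_K\). The case \(K=0\) is handled the same way with \(r_K=\pi/2\), where \(\tan t\) stays finite on \((-\pi/2,\pi/2)\).

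The main obstacle is the collision case in Step~2. One has to justify carefully that at the first collision time the two sheets really are tangent with opposite normals, and that they are ordered, so that the strong maximum principle applies. The delicate point is that the touching set might be degenerate, so tangency is not automatic. My first attempt is to run the argument with the distance function \(d(\cdot,M)\) restricted to \(\overline{\Omega_+}\), following Howard's proof. At the first collision the cut point \(p\) is not focal, since \(\rho<r_K\). Because \(p\) is a first collision, there are exactly two minimizing segments from \(p\) to \(M\), and their initial directions at \(p\) must be opposite; otherwise a shorter competitor to \(M\) would exist. This forces the double-normal configuration. The comparison of the local smooth branches \(d_1,d_2\) near \(p\), via \(\Delta d_j=-H\) on the level sets, then yields the contradiction.
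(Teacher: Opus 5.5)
Your proof is correct, but it takes a different route from the paper's. The paper never analyzes collisions directly. It applies Howard's rigidity form of the rolling theorem to each closed side $\overline{\Omega_\pm}$, using Ricci tensor $n\,g>0$ and minimal boundary. It excludes the cylinder and the generalized M\"obius band because they have a flat interval direction, and concludes that rolling radius equals focal radius on each side. It then bounds the focal radius through the zeros of $\cos t-\lambda\sin t$; opposite sides are separated as in your last case.

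You instead prove the needed special case of Howard's theorem by hand. Your formula $H(t)=\tan t\sum_i(1+\kappa_i^2)/(1-\kappa_i\tan t)$ shows exactly where minimality and the positive curvature of the sphere enter. At a first same-side collision, the local branches $d_1,d_2$ must have antipodal gradients at $p$. The reason is an earlier collision: otherwise a nearby point with $d_1,d_2<\rho$ would lie on two normal segments of length $<\rho$. It is not a ``shorter competitor''. So the two sheets are graphs $f_2\ge f_1$ over their common tangent plane, touching at $p$. Then $D^2(f_2-f_1)\ge 0$ at the touching point contradicts $H_2<0<H_1$ measured with the common normal $\nabla d_1(p)$.

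Because this inequality is strict, a pointwise second-order comparison suffices. No strong maximum principle is needed, so the degenerate-touching issue you flag does not arise, and there are no exceptional spaces to rule out. Your route is longer but elementary and self-contained. The paper's route is shorter, at the cost of a black-box citation plus the check that $\overline{\Omega_\pm}$ is not one of Howard's exceptional spaces.

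Two small points to tighten. First, the usual local-diffeomorphism argument shows that both colliding segments have length exactly $\rho$, not merely at most $\rho$; your use of the level set $\{d=\rho\}$ needs this. Second, in the opposite-side case the endpoint $p$ is a priori only known to lie in $\overline{\Omega_+}\cap\overline{\Omega_-}=F(M)$. You need one more line to reach a contradiction, for example local injectivity of $E$ near the preimage of $p$ in $M\times\{0\}$.
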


\begin{proof}
    The embedded connected hypersurface separates the sphere into two connected open domains \(\Omega_+\) and \(\Omega_-\), whose closures are compact manifolds with common boundary \(F(M)\).
    Give each closure the metric induced from the sphere and choose the inward unit normal along its boundary.
    With the induced metric, \(\overline{\Omega_\pm}\) has Ricci tensor \(ng>0\), while its boundary \(F(M)\) has zero mean curvature.
    Fix either sign.
    For the compact manifold \(\overline{\Omega_\pm}\), let \(\operatorname{Roll}\) be the infimum of the inward boundary cut distances and let \(\operatorname{Foc}\) be the infimum of the first inward focal distances.
    By these definitions, the inward normal exponential map is a diffeomorphism from \(\partial\Omega_\pm\times[0,r)\) onto its image whenever \(r<\operatorname{Roll}\), and one always has \(\operatorname{Roll}\leq\operatorname{Foc}\).
    Howard's rigidity form of the rolling theorem states that, for a complete connected manifold with smooth non-empty compact boundary, nonnegative Ricci curvature, and nonnegative inward mean curvature, the strict inequality \(\operatorname{Roll}<\operatorname{Foc}\) can occur only for a Riemannian cylinder or a generalized M\"obius band \cite[Theorem~3]{Howard1999}.
    Each exceptional space has a local product interval direction with zero Ricci curvature, so neither can be isometric to \(\overline{\Omega_+}\) or \(\overline{\Omega_-}\).
    Consequently the rolling radius of each side equals its focal radius.

	Along a normal geodesic in the unit sphere, the tangential normal Jacobi map is
	\begin{equation}\label{eq:jacobi-map-tube}
		J_t=\cos t\,I-\sin t\,A
	\end{equation}
	for the corresponding inward shape operator.
		If \(\lambda\) is a principal curvature and \(K>0\), the first positive zero of \(\cos t-\lambda\sin t\), when it occurs before \(\pi/2\), is \(\arctan(1/\lambda)\) with \(0<\lambda\leq K\). If \(\lambda\leq0\), no positive zero occurs before \(\pi/2\).
	Thus the first focal distance on either side is at least \(\arctan(K^{-1})\).
	When \(K=0\), Equation~\eqref{eq:jacobi-map-tube} is \(\cos t\,I\), and the first focal distance is \(\pi/2\).
	The equality of rolling and focal radii shows that each one-sided normal exponential map is injective and nonsingular before \(r_K\).
    For these parameter values, the normal segments are the unique minimizing geodesics to the boundary and remain in the interior of the corresponding domain.
    Hence normal geodesics directed into opposite sides cannot meet, because the two one-sided images lie in disjoint domains.
	The two one-sided estimates therefore give the required two-sided normal injectivity bound.
\end{proof}

\begin{proposition}\label{prop:area-bound}
	Under the hypotheses of Lemma~\ref{lem:uniform-tube}, let \(r_K\) be defined by \eqref{eq:rK}, and put \(\delta_K=r_K/2\) and \(b_K=\cos\delta_K-K\sin\delta_K\).
	Then \(b_K>0\) and
	\begin{equation}\label{eq:area-bound}
		\Vol(M)
		\leq
		\frac{\Vol(\sphere^{n+1}(1))}{2\delta_K b_K^n}.
	\end{equation}
\end{proposition}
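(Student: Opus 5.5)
The plan is to integrate the normal Jacobian over the half-width tube \(M\times(-\delta_K,\delta_K)\) and compare the resulting volume with that of the sphere, using Lemma~\ref{lem:uniform-tube} to guarantee that the tube map is injective.

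First, positivity of \(b_K\). If \(K=0\), then \(\delta_K=\pi/4\) and \(b_K=\cos(\pi/4)>0\). If \(K>0\), then \(0<\delta_K<r_K=\arctan(K^{-1})\), so \(\tan\delta_K<K^{-1}\). Multiplying by \(K\cos\delta_K>0\) gives \(K\sin\delta_K<\cos\delta_K\), that is, \(b_K>0\).

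Next, the map \(E(x,t)=\cos t\,F(x)+\sin t\,\nu(x)\) on \(M\times(-\delta_K,\delta_K)\). Since \(\delta_K<r_K\), Lemma~\ref{lem:uniform-tube} shows that \(E\) is injective and nonsingular there. Hence, by the area formula,
\[
\Vol(\sphere^{n+1}(1))\geq\Vol\bigl(E(M\times(-\delta_K,\delta_K))\bigr)=\int_{-\delta_K}^{\delta_K}\int_M|\det J_t|\,d\mu\,dt .
\]
Here \(J_t=\cos t\,I-\sin t\,A\) is the Jacobi map \eqref{eq:jacobi-map-tube}. The Jacobian factorizes this way because \(\partial_tE\) is a unit vector orthogonal to \(dE(T_xM)\), and \(dE(X)=J_tX\) in the parallel frame.

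Then a pointwise bound. In a principal frame, \(\det J_t=\prod_i(\cos t-\lambda_i\sin t)\) with \(|\lambda_i|\leq|A|\leq K\). For \(|t|\leq\delta_K<\pi/2\) we have \(\cos t\geq\cos\delta_K\) and \(|\sin t|\leq\sin\delta_K\). Therefore each factor satisfies
\[
\cos t-\lambda_i\sin t\geq\cos\delta_K-K\sin\delta_K=b_K>0 .
\]
So \(|\det J_t|\geq b_K^n\). Integrating gives \(\Vol(\sphere^{n+1}(1))\geq2\delta_K b_K^n\Vol(M)\), which is \eqref{eq:area-bound}.

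The only step needing care is justifying the volume inequality, namely that \(E\) is injective on the full open product so that the images do not overlap. This is exactly the two-sided injectivity supplied by Lemma~\ref{lem:uniform-tube}, and the choice of half radius keeps us strictly inside it. Everything else is elementary.
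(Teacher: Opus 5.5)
Your proposal is correct and follows essentially the same route as the paper: injectivity of \(E\) on the half-width tube from Lemma~\ref{lem:uniform-tube}, the factorization \(\mathcal J(x,t)=\prod_i(\cos t-\lambda_i\sin t)\), the factorwise lower bound \(b_K\), and comparison with \(\Vol(\sphere^{n+1}(1))\) via change of variables. Beyond the paper, you only make explicit two points it asserts without detail, namely the elementary check \(b_K>0\) and the orthogonality of \(\partial_tE\) to \(dE(T_xM)\) behind the Jacobian formula.
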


\begin{proof}
	The normal Jacobian at \((x,t)\) is
	\begin{equation}\label{eq:tube-jacobian}
		\mathcal J(x,t)=\det\bigl(\cos t\,I-\sin t\,A_x\bigr)
		=\prod_{i=1}^n\bigl(\cos t-\lambda_i(x)\sin t\bigr).
	\end{equation}
	For \(|t|\leq\delta_K\) and \(|\lambda_i|\leq K\), every factor in \eqref{eq:tube-jacobian} is at least \(b_K\).
	The choice \(\delta_K<r_K\) implies \(b_K>0\).
    Lemma~\ref{lem:uniform-tube} shows that the normal exponential map is injective on \(M\times[-\delta_K,\delta_K]\).
	The change-of-variables formula and \eqref{eq:tube-jacobian} therefore give
	\[
		\Vol(\sphere^{n+1}(1))
		\geq
		\int_{-\delta_K}^{\delta_K}\int_M\mathcal J(x,t) d \mu(x) d  t
		\geq
		2\delta_K b_K^n\Vol(M),
	\]
	which is \eqref{eq:area-bound}.
\end{proof}

For \(x\in\sphere^{n+1}(1)\) and \(\rho>0\), let \(B_\rho(x)\) denote the open geodesic ball of radius \(\rho\) centered at \(x\).
If \(P\subset T_x\sphere^{n+1}(1)\) is a linear subspace, set
\[
	B_\rho^P(0)=\{v\in P:|v|<\rho\},
\]
where the norm is induced by the spherical metric at \(x\).

\begin{lemma}\label{lem:uniform-local-graph}
	For fixed \(n\geq2\), \(K\geq0\), and \(0<\theta<1\), there are constants \(r>0\) and \(C<\infty\).
	If \(\Sigma^n\subset\sphere^{n+1}(1)\) is a closed connected embedded minimal hypersurface satisfying \(|A|\leq K\), then, for every \(x\in\Sigma\), the entire set \(\Sigma\cap B_r(x)\) is a single spherical graph over \(T_x\Sigma\).
	More precisely, after choosing a unit normal \(\nu(x)\), there is a smooth function
	\[
		u_x:B_{2r}^{T_x\Sigma}(0)\longrightarrow(-r,r)
	\]
	such that \(u_x(0)=0\), \(Du_x(0)=0\),
	\[
		\|u_x\|_{C^{1,1}(B_{2r}^{T_x\Sigma}(0))}\leq C,
		\qquad
		\|Du_x\|_{C^0(B_{2r}^{T_x\Sigma}(0))}\leq\theta,
	\]
	and
	\begin{equation}\label{eq:entire-local-graph}
		\Sigma\cap B_r(x)
		=
		\left\{
		\exp_x^\sphere\bigl(v+u_x(v)\nu(x)\bigr):
		v\in B_{2r}^{T_x\Sigma}(0)
		\right\}
		\cap B_r(x).
	\end{equation}
\end{lemma}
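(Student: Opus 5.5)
The plan is to combine two standard facts: the curvature bound \(|A|\leq K\) controls each sheet of \(\Sigma\) locally, and the uniform two-sided tube of Lemma~\ref{lem:uniform-tube} forbids more than one sheet inside a small ball. First I will work in normal coordinates: pull back \(\Sigma\) near \(x\) by \(\exp_x^\sphere\) restricted to a ball of fixed small radius. In that ball the round metric is uniformly \(C^2\)-close to Euclidean, with constants depending only on \(n\). Since \(|A|\leq K\), the connected component of \(\Sigma\cap B_{\rho}(x)\) through \(x\) is, for some \(\rho=\rho(n,K,\theta)\), a graph over \(T_x\Sigma\). Its defining function \(u_x\) satisfies \(u_x(0)=0\), \(Du_x(0)=0\), \(|Du_x|\leq\theta\) and \(|D^2u_x|\leq C(n,K)\).

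I will prove this by the usual ODE/continuity argument. Along any radial segment in \(T_x\Sigma\), the angle between the unit normal of the sheet and \(\nu(x)\) grows at rate at most \(K+O(\rho)\). Hence the tangent planes stay within angle \(\arcsin\theta'\) for \(|v|\leq c\theta/(K+1)\). Then the projection of the component to \(T_x\Sigma\) is a local diffeomorphism with a radial extension property, so it covers \(B_{2r}^{T_x\Sigma}(0)\) whenever \(2r\leq\rho/2\). It is also injective, because a non-injective projection over a convex disk would force a vertical tangent somewhere. The \(C^{1,1}\) bound then follows from expressing the second fundamental form of a graph, \(D^2u_x/\sqrt{1+|Du_x|^2}\) plus metric error terms, and bounding it by \(K\). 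Minimality is not even needed here; smoothness of \(u_x\) comes from smoothness of \(\Sigma\).

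The main obstacle is the word \emph{entire} in \eqref{eq:entire-local-graph}: we must exclude other sheets of \(\Sigma\) entering \(B_r(x)\). Here I use Lemma~\ref{lem:uniform-tube}, which says the normal exponential map \(E\) is injective on \(\Sigma\times(-r_K,r_K)\). I will choose \(r\leq r_K/8\), also small compared with \(\rho\).

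Suppose \(y\in\Sigma\cap B_r(x)\) lies off the graph of \(u_x\). Apply the graph construction at \(y\) to get a second sheet through \(y\). This sheet is disjoint from the first, since otherwise the connected pieces agree by uniqueness of the local graph representation. Because \(d(x,y)<r\), both sheets are nearly parallel: their tangent planes are each within \(O(\theta+Kr)\) of \(T_x\Sigma\). The nearest-point projection of \(y\) onto the sheet through \(x\) therefore lands at a point \(p\) at distance less than \(2r<r_K\), with \(y=E(p,t)\) for some \(|t|<2r\). At the same time \(y=E(y,0)\), which contradicts injectivity of \(E\) on \(\Sigma\times(-r_K,r_K)\).

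To make this rigorous, I will first try minimizing \(d(y,\cdot)\) over the compact graph piece over \(\overline{B^{T_x\Sigma}_{r}(0)}\). The aim is to show the minimizer is interior, using \(|Du_x|\leq\theta\) and \(d(x,y)<r\), so that the minimizing segment is normal to \(\Sigma\) at \(p\). With that established, the contradiction follows and \(\Sigma\cap B_r(x)\) equals the single graph, giving \eqref{eq:entire-local-graph} with constants depending only on \(n,K,\theta\).
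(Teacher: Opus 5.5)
Your proposal is correct in outline but takes a different route from the paper. The paper never builds the sheet through \(x\) from the curvature bound. It applies Lemma~\ref{lem:uniform-tube} at the outset to get a smooth signed distance function \(s\) on the \(\rho\)-neighborhood of \(\Sigma\), with \(\rho=r_K/2\), \(|\nabla s|=1\), and a uniform Hessian bound coming from the parallel-hypersurface eigenvalues. In the cylinder \(\Phi_x(v,\tau)=\exp_x(v+\tau\nu(x))\) it shows \(\partial_\tau(s\circ\Phi_x)\geq\frac14\), so each vertical line meets \(s^{-1}(0)=\Sigma\) exactly once. The implicit function theorem then gives \(u_x\) and its \(C^{1,1}\) bound. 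The ``entire'' property is automatic, because every point of \(\Sigma\) in the cylinder is a zero of \(s\).

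You instead split the lemma into two parts. The local part is a curvature-only graphical radius via the usual continuation argument, valid for any immersion with \(|A|\leq K\). The global part uses embeddedness only through injectivity of \(E\), applied at a nearest point on the sheet through \(x\). Your version isolates exactly where the tube enters. The paper's version obtains existence, regularity, and single-sheetedness from one monotonicity argument and avoids the continuation step.

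Two adjustments are needed.

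First, minimizing \(d(y,\cdot)\) over the piece over \(\overline{B^{T_x\Sigma}_r(0)}\) need not give an interior minimizer. The point \(y\) can sit horizontally near the edge of that disk, and a tilted sheet can push the foot of the perpendicular outside it. Minimize instead over the piece over a closed disk of radius \(R\geq 2r\); this is available since your graph exists out to radius \(\rho/2\) and you may take \(4r\leq\rho/2\). Then every boundary point \(q\) satisfies \(d(y,q)\geq R-d(x,y)>d(x,y)\), which exceeds the minimum, so the minimizer \(p\) is interior. The segment from \(p\) to \(y\) is therefore normal, and \(E(p,\pm d(p,y))=y=E(y,0)\) with \(d(p,y)<r<r_K\). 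You also have \(p\neq y\), because any point of the larger piece lying in \(B_r(x)\) already lies over \(B_r^{T_x\Sigma}(0)\). This contradicts injectivity.

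Second, your claim that the sheet through \(y\) is nearly parallel to \(T_x\Sigma\) is not justified a priori, and it is not needed. Drop it; the nearest-point argument alone gives the contradiction.

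Finally, to get the stated range \((-r,r)\) for \(u_x\), shrink \(r\) so that \(|u_x(v)|\leq\tfrac12 C|v|^2<r\) on \(B_{2r}^{T_x\Sigma}(0)\).
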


\begin{proof}
	Let \(r_K\) be defined by \eqref{eq:rK}, put \(\rho=\frac{r_K}{2}\), \(b_K=\cos\rho-K\sin\rho>0\), and choose a unit normal field \(\nu\) along \(\Sigma\).
	By Lemma~\ref{lem:uniform-tube}, the normal exponential map
	\[
		\mathcal E:\Sigma\times(-\rho,\rho)\longrightarrow\sphere^{n+1}(1),
		\qquad
		\mathcal E(x,t)=\cos t\,x+\sin t\,\nu(x),
	\]
	is a diffeomorphism onto its image \(U\).
	The image \(U\) is exactly the open metric \(\rho\)-neighborhood of \(\Sigma\).
	Indeed, a minimizing geodesic from a point at distance less than \(\rho\) to \(\Sigma\) is normal at its endpoint.
	Conversely, if \(\mathcal E(x,t)\) had distance less than \(|t|\) from \(\Sigma\), a minimizing normal geodesic would give a second representation under \(\mathcal E\), contradicting injectivity.
	Thus \(s\bigl(\mathcal E(x,t)\bigr)=t\) is the smooth signed-distance function on \(U\).

	In a principal frame at \(x\), the tangential eigenvalues of \(\nabla^2s\) at \(\mathcal E(x,t)\) are
	\[
		-\frac{\sin t+\lambda_i(x)\cos t}
		{\cos t-\lambda_i(x)\sin t},
		\qquad 1\leq i\leq n,
	\]
	and the eigenvalue in the normal geodesic direction is zero.
	Since
	\[
		\cos t-\lambda_i(x)\sin t
		\geq \cos\rho-K\sin\rho=b_K>0
	\]
	for \(|t|<\rho\), we have the uniform estimates
	\begin{equation}\label{eq:signed-distance-C11}
		|\nabla s|=1,
		\qquad
		|\nabla^2s|
		\leq
		C_0:=\frac{\sqrt n(1+K)}{b_K}
		\quad\text{on }U.
	\end{equation}

	Choose \(r>0\), depending only on \(n\) and \(K\), so small that \(4r<\rho\), \(16C_0r<1\), and the spherical exponential charts on balls of radius \(4r\) have uniformly controlled \(C^2\) norms and distortion.
	For \(x\in\Sigma\), define
	\[
		\Phi_x(v,\tau)
		=
		\exp_x^\sphere\bigl(v+\tau\nu(x)\bigr),
		\qquad
		v\in B_{2r}^{T_x\Sigma}(0),\quad |\tau|\leq r.
	\]
	The coordinate cylinder lies in \(B_{4r}(x)\subset U\).
	At \((0,0)\), one has \(\left.\partial_\tau(s\circ\Phi_x)\right|_{(0,0)}=1\).
	The Hessian bound \eqref{eq:signed-distance-C11} and the uniform \(C^2\) control of \(\Phi_x\) imply a uniform Lipschitz bound for \(d(s\circ\Phi_x)\).
	After decreasing \(r\), uniformly in \(x\) and \(\Sigma\), it follows that
	\begin{equation}\label{eq:vertical-monotonicity}
		\partial_\tau(s\circ\Phi_x)(v,\tau)\geq\frac14
	\end{equation}
	throughout the cylinder.
	Moreover, since \(s(x)=0\) and \(ds_x(v)=0\) for \(v\in T_x\Sigma\), Taylor's formula along the radial geodesic and \eqref{eq:signed-distance-C11} give \(\left|s\bigl(\Phi_x(v,0)\bigr)\right| \leq\frac12C_0|v|^2<\frac r8\).
	Integrating \eqref{eq:vertical-monotonicity} in the \(\tau\)-direction yields
	\[
		s\bigl(\Phi_x(v,-r)\bigr)<0
		<
		s\bigl(\Phi_x(v,r)\bigr).
	\]
	Every vertical coordinate line therefore contains exactly one zero of \(s\circ\Phi_x\).

	The quantitative implicit-function theorem produces a smooth function \(u_x\) whose graph is precisely the zero set in the coordinate cylinder.
    Since \(s(\Phi_x(0,0))=s(x)=0\), uniqueness of the zero on the vertical line gives \(u_x(0)=0\). Differentiating \((s\circ\Phi_x)\bigl(v,u_x(v)\bigr)=0\) at \(v=0\), and using \(ds_x|_{T_x\Sigma}=0\) and \(\partial_\tau(s\circ\Phi_x)(0,0)=1\), gives \(Du_x(0)=0\).
	The functions \(s\circ\Phi_x\) have a uniform \(C^{1,1}\) bound and their \(\tau\)-derivatives have the uniform positive lower bound \eqref{eq:vertical-monotonicity}.
	Differentiating \((s\circ\Phi_x)\bigl(v,u_x(v)\bigr)=0\) once and twice, the second time almost everywhere, gives a uniform \(C^{1,1}\) bound for \(u_x\).
	Let \(C_1\) be a uniform Lipschitz bound for \(Du_x\) furnished by the preceding \(C^{1,1}\) estimate.
    Since \(Du_x(0)=0\), for every \(v\in B_{2r}^{T_x\Sigma}(0)\), we have \(|Du_x(v)| \leq C_1|v| \leq 2C_1r\).
    After decreasing \(r\) further, depending also on \(\theta\), we may assume that \(2C_1r\leq\theta\).
    Hence
    \[
        \|Du_x\|_{C^0(B_{2r}^{T_x\Sigma}(0))}
        \leq
        \theta.
    \]

	If \(y\in\Sigma\cap B_r(x)\), its unique spherical exponential coordinate at \(x\) has the form \(v+\tau\nu(x)\), with \(|v|,|\tau|<r\).
	Since \(s(y)=0\), uniqueness of the zero on the corresponding vertical line gives \(\tau=u_x(v)\).
	The converse inclusion is immediate because the graph is contained in \(s^{-1}(0)=\Sigma\).
	This proves \eqref{eq:entire-local-graph}, as we hoped.
\end{proof}

\begin{proposition}\label{prop:smooth-compactness}
	Let \(F_j:M_j^n\hookrightarrow\sphere^{n+1}(1)\) be a sequence of closed connected embedded minimal hypersurfaces satisfying
	\[
		\sup_{M_j}|A_j|\leq K.
	\]
	After passage to a subsequence, there are a closed connected smooth manifold \(M_\infty\), a smooth minimal embedding \(F_\infty:M_\infty\hookrightarrow\sphere^{n+1}(1)\), and diffeomorphisms \(\psi_j:M_\infty\to M_j\) such that \(F_j\circ\psi_j\to F_\infty\) smoothly.
    Moreover, the images \(F_j(M_j)\) are single-valued spherical normal graphs over \(F_\infty(M_\infty)\) for all sufficiently large \(j\).
\end{proposition}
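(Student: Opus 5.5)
The plan is to combine the uniform area bound of Proposition~\ref{prop:area-bound} with the uniform local graph description of Lemma~\ref{lem:uniform-local-graph}. Together they give a compactness argument in the Hausdorff and varifold senses, which elliptic regularity then upgrades to smooth graphical convergence.

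\emph{Step 1: extract a subsequential limit.} Set \(\Sigma_j=F_j(M_j)\). By compactness of the space of closed subsets of \(\sphere^{n+1}(1)\) in the Hausdorff metric, after passing to a subsequence we have \(\Sigma_j\to\Sigma_\infty\) in the Hausdorff sense. Proposition~\ref{prop:area-bound} bounds \(\Vol(M_j)\) uniformly, so the associated integral varifolds also converge, after a further subsequence, to a stationary integral varifold \(V\). Fix \(r\) and \(C\) from Lemma~\ref{lem:uniform-local-graph} with \(\theta=1/2\). For \(x_j\in\Sigma_j\) with \(x_j\to x\), the sets \(\Sigma_j\cap B_r(x_j)\) are single graphs of functions \(u_{x_j}\) with uniform \(C^{1,1}\) bounds. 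The minimal surface equation is a uniformly elliptic quasilinear equation for these graphs, so Schauder theory upgrades the \(C^{1,1}\) bounds to uniform \(C^{m}\) bounds for every \(m\). After rotating tangent planes to converge, Arzel\`a--Ascoli gives smooth convergence of the \(u_{x_j}\) to a function \(u_x\) whose graph equals \(\Sigma_\infty\cap B_r(x)\).

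\emph{Step 2: the limit is a smooth embedded minimal hypersurface.} By Step~1, \(\Sigma_\infty\) is locally a single smooth minimal graph, hence a smooth closed embedded minimal hypersurface with \(|A|\le K\). Multiplicity one is the key point. Inside \(B_r(x_j)\) the sheet \(\Sigma_j\) is a single graph, so two sheets of \(\Sigma_j\) cannot converge to the same piece of \(\Sigma_\infty\). Equivalently, Lemma~\ref{lem:uniform-tube} applied to \(\Sigma_j\) forbids other parts of \(\Sigma_j\) from lying within distance \(r_K\) of a given sheet. Thus \(V\) is \(\Sigma_\infty\) with multiplicity one.

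\emph{Step 3: realize \(\Sigma_j\) as normal graphs and build the diffeomorphisms.} Lemma~\ref{lem:uniform-tube} also applies to \(\Sigma_\infty\), which gives a tubular neighborhood of width \(r_K\) with nearest-point projection \(\pi_\infty\). For large \(j\), \(\Sigma_j\) lies in this tube by Hausdorff convergence. The local graph convergence then shows that \(\Sigma_j\) is transverse to the normal fibers, so \(\pi_\infty|_{\Sigma_j}\) is a local diffeomorphism. It is proper, hence a covering map, and its degree is the number of sheets. By the multiplicity-one property of Step~2, and since within each \(B_r\) there is a single graph, every fiber meets \(\Sigma_j\) exactly once. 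So \(\Sigma_j\) is a single-valued normal graph \(\{\cos u_j\,y+\sin u_j\,\nu_\infty(y)\}\) with \(u_j\to0\) smoothly.

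Put \(M_\infty=\Sigma_\infty\) with \(F_\infty\) the inclusion; it is connected as a smooth Hausdorff limit of connected sets. Set \(\psi_j=F_j^{-1}\circ(\pi_\infty|_{\Sigma_j})^{-1}\). Then \(F_j\circ\psi_j=F_{u_j}\to F_\infty\) smoothly.

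The main obstacle is the sheet-counting in Step~3: excluding a degree-\(d\ge2\) covering onto \(\Sigma_\infty\). The two sources of control must be combined carefully. The uniform local-graph statement applies to all of \(\Sigma_j\cap B_r(x)\), not just one component, and this is exactly what rules out two nearby sheets, so I will invoke that uniformity of Lemma~\ref{lem:uniform-local-graph} there.
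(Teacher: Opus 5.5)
Your proposal is correct and follows essentially the paper's route: Hausdorff compactness, then the fact that Lemma~\ref{lem:uniform-local-graph} describes all of \(\Sigma_j\cap B_r(x)\) as a single graph, which gives one-sheeted local convergence and multiplicity one, and finally the nearest-point projection onto \(\Sigma_\infty\) as a covering whose degree is one because every fiber point lies in a single one-sheet chart. The only real difference is where elliptic regularity enters: you apply Schauder estimates to the smooth minimal graphs of \(\Sigma_j\), so the limit is immediately smooth and minimal and your varifold limit \(V\) is unnecessary, whereas the paper first takes a \(C^{1,\alpha}\) limit, proves stationarity via multiplicity-one varifold convergence and \(C^{1,1}\) weak-solution regularity, and bootstraps uniform bounds only at the end for the normal heights \(v_j\).
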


\begin{proof}
	Set \(\Sigma_j=F_j(M_j)\).
    Apply Lemma~\ref{lem:uniform-local-graph} with \(\theta=1/10\), and let \(r\) be the resulting radius.
	The Blaschke selection theorem \cite[Theorem~7.3.8, p.~253]{BuragoBuragoIvanov2001} gives, after passage to a subsequence, Hausdorff convergence
	\[
		\Sigma_j\longrightarrow\Sigma_\infty
	\]
	for a nonempty compact set \(\Sigma_\infty\subset\sphere^{n+1}(1)\).
	Choose points \(x_\infty^1,\ldots,x_\infty^N\in\Sigma_\infty\) such that the balls \(B_{r/8}(x_\infty^a)\) cover \(\Sigma_\infty\), and choose \(x_j^a\in\Sigma_j\) with \(x_j^a\to x_\infty^a\).
    Using parallel transport, identify the tangent spaces with \(T_{x_\infty^a}\sphere^{n+1}(1)\).
    After passing to a further subsequence, we may assume, for all \(1\leq a\leq N\), that
    \[
        T_{x_j^a}\Sigma_j\longrightarrow P_\infty^a
        \subset T_{x_\infty^a}\sphere^{n+1}(1).
    \]

	The local graphs supplied by Lemma~\ref{lem:uniform-local-graph} have uniformly small gradients and uniform \(C^{1,1}\) bounds.
    Using parallel transport, identify the domains of the local graphs with fixed balls in \(P_\infty^a\), and regraph over \(P_\infty^a\) when necessary.
    The Arzelà--Ascoli theorem then gives, for every \(\alpha\in(0,1)\),
    \[
        u_j^a\longrightarrow u_\infty^a
        \qquad\text{in }C^{1,\alpha},
    \]
    where \(u_\infty^a\) is \(C^{1,1}\) with the same uniform bound.
	The limit graph is exactly \(\Sigma_\infty\) in \(B_{r/2}(x_\infty^a)\).
    For one inclusion, every point on the limit graph is a limit of points of \(\Sigma_j\).
    For the reverse inclusion, let \(y\in\Sigma_\infty\cap B_{r/2}(x_\infty^a)\) and choose \(y_j\in\Sigma_j\) with \(y_j\to y\).
    For large \(j\), \eqref{eq:entire-local-graph} shows that \(y_j\) lies on the graph centered at \(x_j^a\).
    Hence there is no additional component or sheet in the smaller ball.

    For large \(j\), the balls \(B_{r/4}(x_j^a)\) cover \(\Sigma_j\), by Hausdorff convergence and the choice of the points \(x_\infty^a\).
    On overlaps, the limiting graphs agree, since they describe the same Hausdorff limit.
    They therefore define an embedded \(C^{1,1}\) atlas on \(\Sigma_\infty\), and the convergence \(\Sigma_j\to\Sigma_\infty\) is locally one-sheeted in \(C^{1,\alpha}\).
    Since a Hausdorff limit of connected compact sets is connected, \(\Sigma_\infty\) is connected.

    Proposition~\ref{prop:area-bound} gives a uniform mass bound. The multiplicity-one conclusion, however, comes from the finite one-sheet graph cover rather than from the mass bound alone. 
    Regard each associated varifold as a Radon measure on the compact Grassmann bundle \(G_n(T\sphere^{n+1})\).
    On each one-sheet graph chart, the graph parametrizations, area densities, and tangent planes converge uniformly.
    Using a partition of unity subordinate to the smaller balls, we may test this convergence against any continuous function on the Grassmann bundle.
    It follows that \(\Sigma_j\to\Sigma_\infty\) as varifolds with multiplicity one.
    For every smooth vector field \(X\) on the sphere, the function \((x,P)\mapsto\operatorname{div}_P X(x)\) is continuous on the Grassmann bundle.
    Minimality and the preceding varifold convergence therefore imply
	\[
		0
		=
		\lim_{j\to\infty}
		\int_{\Sigma_j}\operatorname{div}_{T\Sigma_j}X\, d \mu_j
		=
		\int_{\Sigma_\infty}
		\operatorname{div}_{T\Sigma_\infty}X\, d \mu_\infty.
	\]
	Hence \(\Sigma_\infty\) is stationary.

    In a \(C^{1,1}\) graph chart, stationarity implies that the graph function \(u\) is a weak solution of the Euler--Lagrange equation for the area functional
    \begin{equation}\label{eq:weak-minimal-graph}
        \partial_\beta\mathcal A^\beta(x,u,\nabla u)
        =
        \mathcal B(x,u,\nabla u),
    \end{equation}
    where the equation is understood in the sense of distributions.
    Here \(\mathcal A^\beta\) and \(\mathcal B\) are smooth.
    Under the uniform gradient bound, the matrix \(\left( \frac{\partial\mathcal A^\beta}{\partial p_\gamma} \right)_{\beta,\gamma=1}^n\) is uniformly positive definite.
    Since \(u\in C^{1,1}\subset W^{2,\infty}\), the Sobolev chain rule expands \eqref{eq:weak-minimal-graph} almost everywhere into the nondivergence equation
    \[
        a^{\beta\gamma}(x)u_{\beta\gamma}
        =
        f(x),
    \]
    where
    \[
        a^{\beta\gamma}(x)
        =
        \frac{\partial\mathcal A^\beta}{\partial p_\gamma}
        (x,u(x),\nabla u(x))
    \]
    and
    \[
        f(x)
        =
        \mathcal B(x,u,\nabla u)
        -\frac{\partial\mathcal A^\beta}{\partial x^\beta}(x,u,\nabla u)
        -\frac{\partial\mathcal A^\beta}{\partial z}(x,u,\nabla u)u_\beta.
    \]
    Thus \(a^{\beta\gamma}\) and \(f\) are Lipschitz, and the equation is uniformly elliptic.
    In particular, on every relatively compact subchart and for each \(\alpha\in(0,1)\), they belong to \(C^{0,\alpha}\). Since \(u\in W^{2,\infty}\) is a strong solution of the equation almost everywhere, the interior strong-solution regularity theorem for uniformly elliptic nondivergence equations gives \(u\in C^{2,\alpha}\) \cite[Theorem~9.19]{GilbargTrudinger2001}.
	Once this regularity is available, differentiating the equation and applying Schauder estimates iteratively gives \(u\in C^\infty\).
	Therefore \(\Sigma_\infty\) is a closed connected smooth embedded minimal hypersurface.

	Let \(\pi_\infty\) be the nearest-point projection from a fixed tubular neighborhood of the smooth compact hypersurface \(\Sigma_\infty\) onto \(\Sigma_\infty\).
	Hausdorff and local \(C^1\) convergence imply that \(\Sigma_j\) lies in this tube and is transverse to its normal fibers for all sufficiently large \(j\).
	Thus
	\[
		\pi_\infty|_{\Sigma_j}:\Sigma_j\longrightarrow\Sigma_\infty
	\]
	is a local diffeomorphism.
	Its image is open and closed in the connected hypersurface \(\Sigma_\infty\), so the map is surjective.
    Since \(\Sigma_j\) is compact, it is therefore a finite covering.

	We now verify that this covering has degree one.
	Fix \(y\in\Sigma_\infty\) and choose one of the smaller one-sheet convergence balls containing \(y\).
	After shrinking that ball once, both \(\Sigma_\infty\) and the entire portion of \(\Sigma_j\) in the twice larger ball are graphs over a fixed convex ball in \(T_y\Sigma_\infty\), and their graph functions converge in \(C^1\).
	In these coordinates, \(\pi_\infty|_{\Sigma_j}\) is represented by a map \(\Pi_j\) satisfying
    \[
        \|D\Pi_j-I\|_{C^0}<\frac12
    \]
    for all sufficiently large \(j\).
    For points \(v,w\) in the smaller convex coordinate ball, integration along the segment from \(w\) to \(v\) gives
    \[
        |\Pi_j(v)-\Pi_j(w)|
        \geq\frac12|v-w|.
    \]
    Hence \(\Pi_j\) is injective there.
    If \(z\in\Sigma_j\) lies in the fiber over \(y\), then \(y=\pi_\infty(z)\) and hence
    \[
        d(z,y)=d(z,\Sigma_\infty)
        \leq d_H(\Sigma_j,\Sigma_\infty)\longrightarrow0.
    \]
    Thus every point of the fiber lies in this coordinate ball when \(j\) is large.
	The fiber therefore contains at most one point, and surjectivity shows that it contains exactly one.
	Since \(y\) was arbitrary, the covering has degree one and \(\pi_\infty|_{\Sigma_j}\) is a diffeomorphism.

	Set \(M_\infty=\Sigma_\infty\), let \(F_\infty\) be the inclusion, and define
	\[
		G_j=\bigl(\pi_\infty|_{\Sigma_j}\bigr)^{-1},
		\qquad
		\psi_j=F_j^{-1}\circ G_j.
	\]
	Then \(\psi_j:M_\infty\to M_j\) is a diffeomorphism and \(F_j\circ\psi_j=G_j\).
	After choosing a unit normal \(\nu_\infty\), there is a unique smooth function \(v_j:M_\infty\to\R\), with \(\|v_j\|_{C^0}\to0\), such that
	\begin{equation}\label{eq:global-normal-heights}
		F_j\circ\psi_j(x)
		=
		\cos v_j(x)\,F_\infty(x)
		+
		\sin v_j(x)\,\nu_\infty(x).
	\end{equation}
	The local \(C^{1,\alpha}\) convergence gives \(v_j\to0\) in \(C^{1,\alpha}\).
	On the finite atlas above, the passage from the tangent-plane graphs in Lemma~\ref{lem:uniform-local-graph} to the normal graph \eqref{eq:global-normal-heights} is given by a fixed smooth coordinate change.
	Uniform transversality bounds its inverse derivatives, so the uniform local \(C^{1,1}\) estimates transfer to a uniform \(C^{1,1}\) bound for \(v_j\).

	In a fixed finite atlas of \(M_\infty\), the minimal graph equation takes the form
    \[
        a_\infty^{\beta\gamma}(x,v_j,\nabla v_j)
        \nabla_\beta\nabla_\gamma v_j
        =
        f_\infty(x,v_j,\nabla v_j),
    \]
    where the coefficients \(a_\infty^{\beta\gamma}\) and \(f_\infty\) depend smoothly on \(x\), \(v_j\), and \(\nabla v_j\).
    The equation is uniformly elliptic.
    The uniform \(C^{1,1}\) bound makes the coefficients and the right-hand side uniformly bounded in \(C^{0,\alpha}\).
    Applying the interior strong-solution estimate on a finite collection of slightly smaller charts yields a uniform \(C^{2,\alpha}\) bound.
    Schauder bootstrapping then yields uniform \(C^{m,\alpha}\) bounds for every \(m\).
    Thus the sequence \(\{v_j\}\) is precompact in \(C^\infty(M_\infty)\).
    Since \(v_j\to0\) in \(C^1\), every smooth subsequential limit is zero, and hence \(v_j\to0\) smoothly.
    By \eqref{eq:global-normal-heights}, \(F_j\circ\psi_j\to F_\infty\) smoothly, and the images are single-valued spherical normal graphs over \(F_\infty(M_\infty)\) for all sufficiently large \(j\).
\end{proof}

\begin{remark}\label{rem:compactness-logic}
	A uniform bound for \(|A|\) gives local graphical control.
    The global argument also uses minimality and embeddedness through Howard's rolling theorem.
    This gives a uniform tubular neighborhood, which in turn yields the area bound and rules out multiple sheets in the limit.

	Both assumptions are needed.
    Wiygul constructed embedded minimal surfaces by stacking Clifford tori that converge to the Clifford torus with any prescribed fixed multiplicity \cite{Wiygul2020}.
    In these examples the catenoidal necks shrink, so no uniform bound for \(|A|\) can hold.
    On the other hand, if immersions are allowed, one may precompose a standard Clifford embedding with an \(\sphere^1\)-factor by a degree-\(d\) covering of that factor.
    The resulting immersions have the same \(|A|\), \(S\), and \(f_3\), while their area and covering multiplicity increase with \(d\).
    Thus the curvature bound excludes the first behavior, while embeddedness excludes the second.
\end{remark}

\begin{corollary}
	For fixed \(n\geq2\) and \(\Lambda\geq0\), only finitely many diffeomorphism types occur among closed connected manifolds \(M^n\) admitting a minimal embedding \(F:M\hookrightarrow\sphere^{n+1}(1)\) with \(\sup_M|A|^2\leq\Lambda\).
\end{corollary}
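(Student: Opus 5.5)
The plan is a contradiction argument built on Proposition~\ref{prop:smooth-compactness}. Suppose there are infinitely many pairwise non-diffeomorphic closed connected manifolds \(M_j\) admitting minimal embeddings \(F_j:M_j\hookrightarrow\sphere^{n+1}(1)\) with \(\sup_{M_j}|A_j|^2\leq\Lambda\). Put \(K=\sqrt{\Lambda}\), so that \(\sup_{M_j}|A_j|\leq K\) for every \(j\). The hypotheses of Proposition~\ref{prop:smooth-compactness} are then satisfied.

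Applying that proposition and passing to a subsequence, we obtain a closed connected smooth manifold \(M_\infty\) and diffeomorphisms \(\psi_j:M_\infty\to M_j\) for all \(j\) in the subsequence. In particular, any two \(M_j\) and \(M_{j'}\) in the subsequence are diffeomorphic to each other via \(\psi_{j'}\circ\psi_j^{-1}\). An infinite subsequence of pairwise non-diffeomorphic manifolds cannot have this property, which is the required contradiction.

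Formally, one can phrase this as follows. Choose one representative for each diffeomorphism type that occurs; if there were infinitely many types, this would give an infinite sequence with pairwise distinct types. Every subsequence of it would still consist of pairwise non-diffeomorphic manifolds, contradicting the conclusion of the proposition. Note that neither \(S\) nor \(f_3\) needs to be constant here, since Proposition~\ref{prop:smooth-compactness} uses only the uniform curvature bound, minimality, and embeddedness.

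There is no real obstacle at this stage, because all the analytic work is already in Proposition~\ref{prop:smooth-compactness}. That work consists of the uniform tube from Howard's theorem in Lemma~\ref{lem:uniform-tube}, the one-sheet local graphs of Lemma~\ref{lem:uniform-local-graph}, and the degree-one covering argument. The only point to check is that the proposition yields diffeomorphisms \(\psi_j\) onto the actual source manifolds \(M_j\), not merely homeomorphisms or diffeomorphisms of the images. This holds because \(\psi_j=F_j^{-1}\circ G_j\), where \(F_j\) is a smooth embedding and \(G_j\) is the inverse of the smooth diffeomorphism \(\pi_\infty|_{\Sigma_j}\).
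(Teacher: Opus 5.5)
Your proposal is correct and follows the paper's own argument. Both assume infinitely many pairwise non-diffeomorphic \(M_j\), apply Proposition~\ref{prop:smooth-compactness} with \(K=\sqrt{\Lambda}\), and observe that the resulting diffeomorphisms \(\psi_j:M_\infty\to M_j\) make the members of the subsequence pairwise diffeomorphic, which is a contradiction.

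Your side remarks are also accurate: the argument needs no constancy of \(S\) or \(f_3\), and \(\psi_j=F_j^{-1}\circ G_j\) is a genuine diffeomorphism of the source manifolds.
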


\begin{proof}
    Suppose otherwise, choose minimal embeddings \(F_j:M_j\hookrightarrow\sphere^{n+1}(1)\) such that no two of the manifolds \(M_j\) are diffeomorphic and \(\sup_{M_j}|A_j|\leq\sqrt{\Lambda}\).
	Proposition~\ref{prop:smooth-compactness} gives, after passing to a subsequence, a smooth manifold \(M_\infty\) and diffeomorphisms \(\psi_j:M_\infty\to M_j\).
	This contradicts the choice of the \(M_j\).
\end{proof}

\begin{remark}
	The uniformity of the curvature bound in the preceding corollary is essential.
	Lawson constructed closed embedded minimal surfaces \(\Sigma_g\subset\sphere^3\) of arbitrarily large genus \(g\) \cite{Lawson1970}.
	Writing \(S_g=|A_g|^2\), the preceding corollary implies that
	\[
		\max_{\Sigma_g}S_g\longrightarrow\infty
		\qquad\text{as }g\longrightarrow\infty.
	\]
	Indeed, otherwise a subsequence would have a uniform upper bound for \(S_g\) while representing infinitely many diffeomorphism types.
	This observation concerns the larger compactness class and does not assert that the Lawson surfaces have spatially constant \(S\) or \(f_3\).
\end{remark}

\section{Proof of the global theorem}\label{sec:global-proof}

\begin{proof}[Proof of Theorem~\ref{thm:main}]
	Assume for contradiction that \(\mathcal E_n^{(3)}\cap[0,\Lambda]\) contains infinitely many distinct values \(s_j\).
	For every \(j\), choose a closed connected manifold \(M_j\) and a minimal embedding \(F_j:M_j\hookrightarrow\sphere^{n+1}(1)\) such that \(S_j\equiv s_j\) and \(f_{3,j}\) is spatially constant.
	The bound \(s_j\leq\Lambda\) gives \(|A_j|\leq\sqrt\Lambda\).
	Using Proposition~\ref{prop:smooth-compactness}, we obtain a smoothly convergent subsequence with an embedded minimal limit \(F_\infty:M_\infty\hookrightarrow\sphere^{n+1}(1)\), and the members of the subsequence are eventually single normal graphs over the limit.

	After passing to a further subsequence, compactness of \([0,\Lambda]\) gives \(s_j\to s_\infty\).
	Smooth convergence of the second fundamental forms yields \(S_\infty = s_\infty\).
	Choose the normals of the eventual graphs compatibly with a fixed normal of the limit.
	Each corresponding \(f_{3,j}\) is still spatially constant, and smooth convergence gives a spatially constant limit \(f_{3,\infty}\).
	Thus the limiting hypersurface satisfies all hypotheses of Theorem~\ref{thm:local-rigidity}.
	That theorem implies that for every sufficiently large \(j\), \(F_j(M_j)\) has constant squared norm equal to \(s_\infty\).
	Hence \(s_j=s_\infty\) for all sufficiently large \(j\), contradicting the choice of pairwise distinct values.
	The contradiction proves finiteness on \([0,\Lambda]\).
\end{proof}

\begin{corollary}\label{cor:discrete}
	For each fixed \(n\geq2\), the set $\mathcal{E}_n^{(3)}$ is a closed locally finite subset of \([0,\infty)\), and in particular it has no finite accumulation point.
\end{corollary}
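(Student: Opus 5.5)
Local finiteness follows at once from Theorem~\ref{thm:main}: for any finite \(\Lambda\geq0\), the set \(\mathcal E_n^{(3)}\cap[0,\Lambda]\) is finite. Hence every bounded interval \([0,\Lambda]\) meets \(\mathcal E_n^{(3)}\) in finitely many points. Since every \(s\in[0,\infty)\) has a neighborhood contained in some \([0,\Lambda]\), no point of \([0,\infty)\) is an accumulation point of \(\mathcal E_n^{(3)}\). Every element of \(\mathcal E_n^{(3)}\) is nonnegative because \(S=\tr(A^2)\geq0\), so \(\mathcal E_n^{(3)}\subset[0,\infty)\) and there is no finite accumulation point at all.

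Closedness will be derived from the absence of accumulation points rather than proved separately. Suppose \(s\) lies in the closure of \(\mathcal E_n^{(3)}\) in \([0,\infty)\), and pick \(s_j\in\mathcal E_n^{(3)}\) with \(s_j\to s\). The sequence is bounded, say by \(\Lambda=s+1\), so all \(s_j\) lie in the finite set \(\mathcal E_n^{(3)}\cap[0,\Lambda]\). A convergent sequence in a finite set is eventually constant, so \(s=s_j\in\mathcal E_n^{(3)}\) for large \(j\). Thus \(\mathcal E_n^{(3)}\) is closed. In other words, a locally finite subset of \([0,\infty)\) is automatically closed, and the statement reduces to Theorem~\ref{thm:main}.

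There is no substantial obstacle; the content lies entirely in Theorem~\ref{thm:main}. The only point to keep straight is that closedness holds for the trivial reason above. One could also try to argue directly, using Proposition~\ref{prop:smooth-compactness}, that a limit of realized values is realized by a smooth limit hypersurface with constant \(S\) and \(f_3\). That argument is not needed here, because the finiteness statement already forces approximating sequences to be eventually constant.
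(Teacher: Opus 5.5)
Your proposal is correct and follows essentially the same route as the paper. Local finiteness is read off directly from Theorem~\ref{thm:main} on compact intervals, and closedness follows because a point in the closure lies in a compact interval that meets \(\mathcal E_n^{(3)}\) in only finitely many points, so approximating sequences are eventually constant.
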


\begin{proof}
	Local finiteness is exactly the conclusion of Theorem~\ref{thm:main} on arbitrary compact intervals.
	Every locally finite subset of \([0,\infty)\) is closed because a point in its closure lies in a compact interval containing only finitely many points of the subset.
\end{proof}

\begin{remark}\label{rem:scope}
	The set $\mathcal{E}_n^{(3)}$ is a union over all closed connected source manifolds, not a spectrum attached to one fixed \(M\).
	If disconnected sources are allowed while \(S\) and \(f_3\) are required to have one global constant value, selecting any connected component shows that the value set is unchanged.
	Theorem~\ref{thm:main} is therefore the weak Chern conclusion for the subclass defined by embeddedness and constant \(f_3\), but it does not settle the original immersed problem.
\end{remark}

\noindent
\textbf{AI disclosure.}
During the preparatory phase, we used ChatGPT 5.6 Sol to interpret relevant literature and mathematical tools, including the Blaschke selection theorem and the real analytic curve selection lemma. All mathematical arguments and proofs were independently developed and verified by the authors, who prepared the final manuscript and take full responsibility for its content.


\begin{thebibliography}{99}

	\bibitem{AlmeidaBrito1990} S.~C.~de Almeida and F.~G.~B.~Brito, Closed 3-dimensional hypersurfaces with constant mean curvature and constant scalar curvature, \emph{Duke Math. J.} \textbf{61} (1990), no.~1, 195--206.

    \bibitem{BuragoBuragoIvanov2001} D.~Burago, Y.~Burago, and S.~Ivanov, \emph{A Course in Metric Geometry}, Graduate Studies in Mathematics, vol.~33, American Mathematical Society, Providence, RI, 2001.

	\bibitem{Cartan1939} \'{E}.~Cartan, Sur des familles remarquables d'hypersurfaces isoparam\'etriques dans les espaces sph\'eriques, \emph{Math. Z.} \textbf{45} (1939), 335--367.

	\bibitem{Chang1993} S.~P.~Chang, On minimal hypersurfaces with constant scalar curvatures in \(\sphere^4\), \emph{J. Differential Geom.} \textbf{37} (1993), 523--534.

	\bibitem{ChengWeiYamashiro2025} Q.-M.~Cheng, G.~X.~Wei, and T.~Yamashiro, The second gap of the scalar curvature of complete minimal hypersurfaces, \emph{Comm. Anal. Geom.} \textbf{33} (2025), no.~3, 623--636.

	\bibitem{Chern1968} S.~S.~Chern, \emph{Minimal Submanifolds in a Riemannian Manifold}, Technical Report No.~19, Department of Mathematics, University of Kansas, Lawrence, 1968.

	\bibitem{ChernDoCarmoKobayashi1970} S.~S.~Chern, M.~do Carmo, and S.~Kobayashi, Minimal submanifolds of a sphere with second fundamental form of constant length, in \emph{Functional Analysis and Related Fields}, Springer, New York, 1970, pp.~59--75.

	\bibitem{DengGuWei2017} Q.~T.~Deng, H.~L.~Gu, and Q.~Y.~Wei, Closed Willmore minimal hypersurfaces with constant scalar curvature in \(\sphere^5(1)\) are isoparametric, \emph{Adv. Math.} \textbf{314} (2017), 278--305.

	\bibitem{DingXin2011} Q.~Ding and Y.~L.~Xin, On Chern's problem for rigidity of minimal hypersurfaces in the spheres, \emph{Adv. Math.} \textbf{227} (2011), 131--145.

    \bibitem{FiresterTsiamis2026}
    B.~Firester and R.~Tsiamis,
    On Chern's conjecture for minimal submanifolds of the sphere, arXiv:2608.18074 (2026).

	\bibitem{GeTang2012} J.~Q.~Ge and Z.~Z.~Tang, Chern conjecture and isoparametric hypersurfaces, in \emph{Differential Geometry}, Adv. Lect. Math. (ALM), vol.~22, International Press, Somerville, MA, 2012, pp.~49--60.

	\bibitem{GilbargTrudinger2001} D.~Gilbarg and N.~S.~Trudinger, \emph{Elliptic Partial Differential Equations of Second Order}, 2nd ed., Classics in Mathematics, Springer-Verlag, Berlin, 2001.

    \bibitem{Hironaka1973} H.~Hironaka, Subanalytic sets, in \emph{Number Theory, Algebraic Geometry and Commutative Algebra: In Honor of Yasuo Akizuki}, Kinokuniya, Tokyo, 1973, pp.~453--493.

	\bibitem{Howard1999} R.~Howard, Blaschke's rolling theorem for manifolds with boundary, \emph{Manuscripta Math.} \textbf{99} (1999), no.~4, 471--483.

	\bibitem{Lawson1969} H.~B.~Lawson, Jr., Local rigidity theorems for minimal hypersurfaces, \emph{Ann. of Math. (2)} \textbf{89} (1969), 187--197.

    \bibitem{Lawson1970} H.~B.~Lawson, Jr., Complete minimal surfaces in \(\sphere^3\), \emph{Ann. of Math. (2)} \textbf{92} (1970), no.~3, 335--374.

	\bibitem{LeiXuXu2021} L.~Lei, H.~W.~Xu, and Z.~Y.~Xu, On the generalized Chern conjecture for hypersurfaces with constant mean curvature in a sphere, \emph{Sci. China Math.} \textbf{64} (2021), no.~7, 1493--1504.

	\bibitem{Munzner1980} H.~F.~M\"unzner, Isoparametrische Hyperfl\"achen in Sph\"aren, \emph{Math. Ann.} \textbf{251} (1980), 57--71.

	\bibitem{Munzner1981} H.~F.~M\"unzner, Isoparametrische Hyperfl\"achen in Sph\"aren. II, \emph{Math. Ann.} \textbf{256} (1981), 215--232.

	\bibitem{PengTerng1983a} C.~K.~Peng and C.~L.~Terng, Minimal hypersurfaces of spheres with constant scalar curvature, in \emph{Seminar on Minimal Submanifolds}, Ann. of Math. Stud., vol.~103, Princeton University Press, Princeton, NJ, 1983, pp.~177--198.

	\bibitem{PengTerng1983b} C.~K.~Peng and C.~L.~Terng, The scalar curvature of minimal hypersurfaces in spheres, \emph{Math. Ann.} \textbf{266} (1983), 105--113.

	\bibitem{ScherfnerWeissYau2012} M.~Scherfner, S.~Weiss, and S.-T.~Yau, A review of the Chern conjecture for isoparametric hypersurfaces in spheres, in \emph{Advances in Geometric Analysis}, Adv. Lect. Math. (ALM), vol.~21, International Press, Somerville, MA, 2012, pp.~175--187.

	\bibitem{Simons1968} J.~Simons, Minimal varieties in Riemannian manifolds, \emph{Ann. of Math. (2)} \textbf{88} (1968), 62--105.

	\bibitem{TangWeiYan2020} Z.~Z.~Tang, D.~Y.~Wei, and W.~J.~Yan, A sufficient condition for a hypersurface to be isoparametric, \emph{Tohoku Math. J. (2)} \textbf{72} (2020), no.~4, 493--505.

	\bibitem{TangYan2023} Z.~Z.~Tang and W.~J.~Yan, On the Chern conjecture for isoparametric hypersurfaces, \emph{Sci. China Math.} \textbf{66} (2023), 143--162.

    \bibitem{Valent1988} T.~Valent, Boundary Value Problems of Finite Elasticity: Local Theorems on Existence, Uniqueness, and Analytic Dependence on Data, \textit{Springer Tracts in Natural Philosophy}, vol. 31, Springer-Verlag, New York, 1988.

	\bibitem{White1991} B.~White, The space of minimal submanifolds for varying Riemannian metrics, \emph{Indiana Univ. Math. J.} \textbf{40} (1991), no.~1, 161--200.

	\bibitem{Wiygul2020} D.~Wiygul, Minimal surfaces in the 3-sphere by stacking Clifford tori, \emph{J. Differential Geom.} \textbf{114} (2020), no.~3, 467--549.

	\bibitem{XuXu2017} H.~W.~Xu and Z.~Y.~Xu, On Chern's conjecture for minimal hypersurfaces and rigidity of self-shrinkers, \emph{J. Funct. Anal.} \textbf{273} (2017), no.~11, 3406--3425.

    \bibitem{Yau1982} S.-T.~Yau, Problem section, in \emph{Seminar on Differential Geometry}, Ann. of Math. Stud., vol.~102, Princeton University Press, Princeton, NJ, 1982, pp.~669--706.

\end{thebibliography}
\end{document}